\newtheorem{theorem}{Theorem}[section]
\newtheorem*{theorem*}{Theorem}
\newtheorem{question}[theorem]{Question}
\newtheorem*{question*}{Question}
\newtheorem*{conjecture*}{Conjecture}
\newtheorem*{convention*}{Convention}
\newtheorem*{assumption*}{Assumption}
\newtheorem{corollary}[theorem]{Corollary}
\newtheorem*{corollary*}{Corollary}
\newtheorem*{remark*}{Remark}
\newtheorem{proposition}[theorem]{Proposition}
\newtheorem*{proposition*}{Proposition}
\newtheorem{lemma}[theorem]{Lemma}
\newtheorem*{lemma*}{Lemma}
\newtheorem{fact}[theorem]{Fact}
\newtheorem*{fact*}{Fact}
\theoremstyle{definition}
\newtheorem{definition}[theorem]{Definition}
\newtheorem*{definition*}{Definition}
\newtheorem{example}[theorem]{Example}
\newtheorem*{example*}{Example}
\newtheorem{remark}[theorem]{Remark}
\newtheorem{construction}[theorem]{Construction}
\numberwithin{theorem}{section}
\numberwithin{equation}{section}
\DeclareMathOperator{\id}{id}
\DeclareMathOperator{\rc}{rc}
\DeclareMathOperator{\supp}{supp}
\DeclareMathOperator{\Aut}{Aut}
\DeclareMathOperator{\Hom}{Hom}
\DeclareMathOperator{\Int}{Int}
\newcommand{\N}{\mathbb{N}}
\newcommand{\Q}{\mathbb{Q}}
\newcommand{\R}{\mathbb{R}}
\newcommand{\Z}{\mathbb{Z}}
\newcommand{\No}{\mathbf{No}}
\newcommand{\On}{\mathbf{On}}
\newcommand{\uAut}{1\text{-}\!\Aut}
\newcommand{\vAut}{v\text{-}\!\Aut}
\newcommand{\pow}[1]{\!\left(\!\left(#1\right)\!\right)}
\newcommand{\brackets}[1]{\left(#1\right)}
\renewcommand{\geq}{\geqslant}
\renewcommand{\leq}{\leqslant}
\renewcommand{\epsilon}{\varepsilon}
\renewcommand{\phi}{\varphi}
\renewcommand{\rc}{\operatorname{rc}}
\author{Elliot Kaplan}
\author{Lothar Sebastian Krapp}
\author{Michele Serra}
\title{Decomposing the automorphism group of the surreal numbers}
\date{\today}
\keywords{Derivations, Hahn groups and fields, Omega-fields, Strongly linear, Power series, Conway normal form.}
\subjclass[2020]{Primary 08A35, 12J15; Secondary 06F15, 16W60, 12J10.}
\begin{document}

\begin{abstract}
We study the automorphism group of the field of surreal numbers.
Our main structure theorem presents a decomposition of this group into a product of five significant factors.
Using the representation of surreal numbers as generalized power series via their Conway normal form, we apply results on Hahn fields and groups from the literature in order to obtain this decomposition.
Moreover, we provide explicit descriptions of the individual factors enabling us to construct automorphisms on the field of surreal numbers from simpler components.
We then extend our study to strongly linear automorphisms in connection to derivations, as well as automorphisms that preserve further exponential structure on the surreals.
\\

On \'etudie le groupe d'automorphismes du corps des nombres surr\'eels.
Notre th\'eor\`eme de structure principal pr\'esente une d\'ecomposition de ce groupe en un produit de cinq facteurs.
En utilisant la repr\'esentation des nombres surr\'eels en s\'eries de puissances g\'en\'eralis\'ees par leur forme normale de Conway, on applique des r\'esultats de la litt\'erature sur les corps et les groupes de Hahn afin d'obtenir cette d\'ecomposition.
De plus, on fournit des descriptions explicites des facteurs individuels, ce qui nous permet de construire des automorphismes du corps des nombres surr\'eels \`a partir de composants plus simples.
On \'etend ensuite notre \'etude aux automorphismes fortement lin\'eaires en lien avec les d\'erivations, ainsi qu'aux automorphismes qui pr\'eservent aussi une structure exponentielle sur les nombres surr\'eels.
\end{abstract}

\maketitle
\section{Introduction}

The ordered field of surreal numbers $\No$ was first introduced and systematically studied in Conway's seminal work~\cite{Co76}.
Initially constructed from basic order principles, the proper class $\No$ can be endowed with a remarkably rich order-theoretic and algebraic structure and serves as a universal domain within the examination of ordered algebraic structures.
Prominently, $\No$ encompasses both the class $\On$ of ordinal numbers as an ordered subclass and the field $\R$ of real numbers as an ordered subfield.
See Gonshor~\cite{Go86} for an extensive treatment of $\No$.

While there are several equivalent representations of surreal numbers, e.g.\ by sign sequences of ordinal length or by cuts between sets of surreals, in this work we consider surreal numbers as generalized power series.
This representation is possible due to the omega map $x\mapsto \omega^x$ on $\No$, which assigns to each surreal the unique simplest positive representative of some archimedean equivalence class.
By means of the omega map, any surreal $a$ can be expressed uniquely in its Conway normal form
$$a=\sum_{\beta<\alpha} r_\beta\cdot \omega^{y_\beta},$$
where $\alpha$ is an ordinal number, $(y_\beta)_{\beta<\alpha}$ is a strictly decreasing sequence in $\No$ and $r_\beta$ are non-zero coefficients from $\R$ (see~\cite[Theorem~21]{Co76},~\cite[Theorem~5.6]{Go86},~\cite[Theorem 14]{Eh01}).
Via the Conway normal form, the field of surreal numbers can be regarded as a field of generalized power series---a concept originally dating back to Hahn's influential work~\cite{Ha07} on non-archimedean ordered systems.
Gonshor himself points out that \emph{``for many purposes we can simply work with these generalized power series and ignore what surreal numbers are in the first place''}~\cite[page~71]{Go86}.
The largest term $r_0\cdot \omega^{y_0}$ in the Conway normal form of $a\neq 0$ determines both the archimedean equivalence class and the sign of $a$:\ the element $\omega^{y_0}$ is the simplest positive representative of the archimedean equivalence class of $a$, and $a$ has positive sign if and only if $r_0$ is positive.
This means that for the natural valuation, the ordered field $\No$ is isomorphic to the ordered Hahn field $\R\pow{\No}$ with ordered residue field $\R$ and ordered additive value group $\No$.

In this work, we initiate a study of the automorphism group of the field $\No$.
Especially for a highly complex field such as the surreals, we hope that the systematic analysis of automorphisms sheds further light on the (algebraic) structure and symmetries of $\No$.
The more recent works~\cite{ks22} and~\cite{ks23} (based on the third author's doctoral thesis~\cite{Se21}) prompted us to start our investigation.
In these works, the automorphism groups of Hahn fields and Hahn groups are systematically dissected.
Based on this structural analysis of automorphism groups, we derive our main result (\Cref{thm:main}) presenting a decomposition of the automorphism group of the field $\No$ into the product of the following five factors:\footnote{The group operation on the vector spaces $\Hom((\No,+),(\R,+))$ and $\R^{\No}$ is given by pointwise addition, and by composition on all other groups.}
\begin{align}\Aut(\No,+,\cdot) \simeq [\uAut(\No,+,\cdot) \rtimes \Hom((\No,+),(\R,+))] \rtimes \left[\Int(\No,+) \rtimes \left[\Aut(\No,<) \times \R^{\No}\right]\right]\label{eq:main}\end{align}
We refer the reader to \Cref{sec:prelims:aut} for a concise introduction of the notation regarding automorphism groups of Hahn fields and Hahn groups.

We establish the decomposition (\ref{eq:main}) in two steps.
First, regarding $\No$ as the Hahn field $(\R\pow{\No},+,\cdot)$, a decomposition into three factors is presented in \Cref{prop:firstdecomp} and simplified afterwards as follows:
$$\Aut(\No,+,\cdot) \simeq [\uAut(\No,+,\cdot) \rtimes \Hom((\No,+),(\R,+))] \rtimes \Aut(\No,+,<)$$
Now the last factor $\Aut(\No,+,<)$ consists of all automorphisms on $\No$ preserving its ordered additive group structure.
Consequently, we regard $\No$ as the ordered additive Hahn group $(\R\pow{\No},+,<)$, for which we obtain in \Cref{prop:groupdec} a decomposition into three factors as follows:
$$ \Aut(\No,+,<) \simeq \Int(\No,+) \rtimes \left[\Aut(\No,<) \times \R^{\No}\right]$$
In all steps of the decomposition, we simplify some of the factors by pointing out isomorphic representations.
For instance by \Cref{rem:homrno}, $(\Hom((\No,+),(\R,+)),+)$ is isomorphic to $(\R^{\No},+)$.
Moreover, we scrutinize all factors further in order to obtain explicit descriptions of automorphisms on $\No$ arising from these particular factors.
Overall, we point out how automorphisms on $\No$ are naturally generated as the composition of simpler automorphisms of $\No$ with specific features with respect to the presentation of $\No$ as a Hahn field or a Hahn group.

In \Cref{sec:str-additive}, we focus on automorphisms in the first factor of decomposition (\ref{eq:main}), in particular, strongly linear ones (automorphisms that commute with infinite sums---see~\cite[Section 3.4]{Se21} for a concise treatment).
We discuss the connection between strongly linear automorphisms and certain derivations on $\No$, based on the correspondence presented in~\cite{BKKPS24}.
This correspondence enables us to construct strongly linear automorphisms (\Cref{ex:derivations}).
We also construct an automorphism that is not strongly linear (\Cref{ex:notstrong}), and we use this to show that strongly linear automorphisms are not dense in the group of all automorphisms with respect to the topology of pointwise convergence (\Cref{rmk:nondense}).
In \Cref{sec:omega}, we consider automorphisms that preserve further order and exponential structure on the surreals.
We show that bijections preserving the total ordering and simplicity ordering are trivial (\Cref{prop:lesslessstrivial}), that field automorphisms preserving the omega-map and valuation are trivial (\Cref{prop:omega-pres-trivial}), and that strongly linear 1-automorphisms preserving the exponential map are trivial (\Cref{prop:trivialexpstrong}).
We leave as questions a complete description of the automorphisms of these richer structures.

\section{Preliminaries}\label{sec:prelims}

\subsection{General conventions}
We denote by $\N$ the set of natural numbers without $0$.
We work in NBG set theory with global choice---a conservative extension of ZFC in which all proper classes are in bijective correspondence with the class $\On$ of ordinals.
By a \textbf{class}, we mean either a set or a proper class; algebraic objects (groups, fields, etc.) may be classes.
For more on working with class-sized models, see~\cite{Eh89}.
For the reader who would prefer not to think about class-sized models, see \Cref{rem:sets}.

\begin{remark}\label{rem:Autdoesn'texist}
In this work, we will consider automorphism groups of proper class-sized models.
We note that any automorphism of such a model is itself a proper class, and so the group of all automorphisms is \textbf{not} a class in NBG set theory (though it is a definable collection of classes).

All statements about decomposing the automorphism group of the surreals (or any other proper class) are to be read as \emph{meta-statements}, which can be translated into first-order statements in the language of NBG about automorphisms themselves (not the group of automorphisms).
For example, the decomposition $\Aut(\No,+,\cdot)\simeq \Int(\No,+,\cdot) \rtimes \mathrm{Ext}(\No,+,\cdot)$ can be translated as ``every automorphism of $(\No,+,\cdot)$ can be expressed uniquely as a composition of an internal and an external automorphism of $(\No,+,\cdot)$.''

Alternatively, one could work in a stronger foundational framework allowing collections of proper classes as objects (for instance, set theories with hyperclasses), in which the automorphism group of $\No$ exists as a genuine object.
We prefer, however, to remain within NBG and to interpret our results accordingly.
The reader who prefers to avoid these issues can also work with $\No(\kappa)$ in place of $\No$, as discussed in \Cref{rem:sets}, since $\Aut(\No(\kappa),+,\cdot)$ is a set in ZFC.
\end{remark}

 All orderings we consider are \textbf{linear orderings}; the one exception is in \Cref{prop:lesslessstrivial}.
Let $(A,<)$ and $(B,<)$ be ordered classes.
We use standard interval notation such as $A^{>a}\coloneqq\{x\in A\mid a<x\}$ for each $a\in A$.
A map $f\colon A\to B$ is \textbf{order preserving} if for any $x,y\in A$ with $x<y$ also $f(x)<f(y)$.
Given two additive abelian groups $(C,+)$ and $(D,+)$, we denote by $\Hom((C,+),(D,+))$ the group of homomorphisms from $C$ to $D$, where the group operation on $\Hom((C,+),(D,+))$ is given by pointwise addition.
Given a field $k$, we let $k^\times \coloneqq k\setminus \{0\}$ denote the group of invertible elements of $k$.

\textbf{Throughout this work, let $k=(k,+,\cdot,<)$ be a set-sized ordered field and let $G=(G,+,<)$ be an ordered abelian group (possibly a proper class).}

\subsection{Hahn fields}\label{sec:valued-fields}

The \textbf{field of generalized power series} (also called \textbf{maximal Hahn field}) with coefficient field $k$ and value group $G$ is denoted by $k\pow{G}$.
It consists of all power series of the form $s=\sum_{g\in G}s_gt^g$, where $s_g\in k$ for each $g\in G$ and the \textbf{support} $\supp(s)=\{g\in G\mid s_g\neq 0\}$ of $s$ is a well-ordered \emph{set}.
We may also take $\alpha$ to be the ordinal representing the order type of $\supp(s)$ and set $s=\sum_{\beta<\alpha}r_\beta t^{g_\beta}$, where $(g_\beta)_{\beta<\alpha}$ is a strictly increasing enumeration of $\supp(s)$ and $r_\beta = s_{g_\beta} \in k^\times$ for all $\beta$.
 For any $s\in k\pow{G}$ and $g\in G$, we denote by $s_g$ the coefficient of $t^g$ in the power series expansion.
We consider $k$ as a subfield of $k\pow{G}$ by identifying each $r\in k$ with $rt^0\in k\pow{G}$.
We endow $k\pow{G}$ with the \textbf{$t$-adic valuation} $v$ mapping any non-zero $s$ to $\min\supp(s)\in G$.
The ordering on $k\pow{G}$ is given by $[s>0 :\Leftrightarrow s_{v(s)}>0]$.
See~\cite[Section~2]{ks22} for further details on Hahn fields.

\begin{remark}
Suppose that $G$ is a proper class (the situation for much of this paper).
As a consequence of our requirement that the support of any $s \in k\pow{G}$ is a set, the field $k\pow{G}$ has more in common with the \emph{$\kappa$-bounded Hahn fields}~\cite{ks05} than with the maximal Hahn fields defined \emph{\`a la} Hahn.
For instance, $k\pow{G}$ may not contain pseudolimits to proper class-length pseudocauchy sequences, and it may support a totally defined exponential.
Elsewhere in the literature, the maximal Hahn field is written $k\pow{G}_{\On}$, to indicate the boundedness condition on the supports, but we omit this subscript.
\end{remark}

\begin{remark}\label{rem:sets}
We note that our results apply just as well to the field $\No(\kappa)$, the \emph{set} of surreal numbers of length $<\kappa$, for any uncountable regular cardinal $\kappa$.
This is a consequence of~\cite[Proposition 4.7]{DE01}, which establishes that $\No(\kappa) = \R\pow{\No(\kappa)}_{\kappa}$ is a $\kappa$-bounded Hahn field that is canonically isomorphic to its own value group.
\end{remark}

The \textbf{natural valuation} on $G$ is denoted by $v_G$.
It maps each element $g\in G$ to its \textbf{archimedean equivalence class} $\{h\in G\mid |g|<n|h|<n^2|g|\text{ for some }n\in \N\}$.
Its value set $\{v_G(g)\mid g\in G\setminus\{0\}\}$ is ordered by $[v_G(g)<v_G(h) :\Leftrightarrow (|g|>|h| \wedge v_G(g)\neq v_G(h))]$.
If $k$ is archimedean, then the natural valuation on $(k\pow{G},+,<)$ coincides with the $t$-adic valuation.
In this case, $v$ is the finest convex valuation on $k\pow{G}$.

\subsection{Automorphisms}\label{sec:prelims:aut}

The automorphism group (where the group operation is given by composition of functions) of an algebraic structure $\mathcal{A}$ is denoted by $\Aut(\mathcal{A})$.
We always indicate the structure that is preserved by the automorphisms explicitly.
More precisely, 
\begin{itemize}
\item $\Aut(k\pow{G},+,\cdot,<)$ consists of all order preserving field automorphisms on $k\pow{G}$, 
\item $\Aut(k\pow{G},+,\cdot)$ consists of all field automorphisms on $k\pow{G}$, 
\item $\Aut(G,+,<)$ consists of all order preserving group automorphisms on $G$, 
\item $\Aut(G,+)$ consists of all group automorphisms on $G$,
\item $\Aut(G,<)$ consists of all order preserving bijections on $G$.
\end{itemize}

{In the rest of this subsection, we define some groups of automorphisms that will be relevant in the sequel.
We refer to~\cite[Section~3]{ks22} and~\cite[Chapter~2]{Se21} for details.}

\begin{definition}
The group $\vAut(k\pow{G},+,\cdot)$ of all \textbf{valuation preserving (field) automorphisms} consists of all automorphisms $\sigma\in \Aut(k\pow{G},+,\cdot)$ satisfying for any $a,b\in k\pow{G}$, $$v(a)<v(b)\Leftrightarrow v(\sigma(a)) < v(\sigma(b)).$$
Given $\sigma\in \vAut(k\pow{G},+,\cdot)$, the \textbf{induced automorphisms} $\sigma_k\in \Aut(k,+,\cdot)$ and $\sigma_G\in \Aut(G,+,<)$ are given by (see~\cite[Remark 2.1.7]{ks22})
\begin{align*}
\sigma_G\colon& G\to G, g\mapsto v(\sigma(t^g)),\\
\sigma_k\colon& k\to k, a\mapsto [\sigma(at^0)]_0.
\end{align*}
\end{definition}

\begin{definition}
The group $\mathrm{Ext}(k\pow{G},+,\cdot)$ of \textbf{external (field) automorphisms} consists of all automorphisms on $(k\pow{G},+,\cdot)$ of the form
$$\sum_{g\in G}s_gt^g\mapsto \sum_{g\in G}\rho(s_g)t^{\tau(g)}$$
for some $\rho\in \Aut(k,+,\cdot)$ and $\tau\in \Aut(G,+,<)$.
The group $\Int(k\pow{G},+,\cdot)$ of \textbf{internal (field) automorphisms} consists of all $\sigma\in \vAut(k\pow{G},+,\cdot)$ inducing the identity maps, i.e.\ $\sigma_k=\id_k$ and $\sigma_G=\id_G$.
Its subgroup $\uAut(k\pow{G},+,\cdot)$ of \textbf{$1$-automorphisms} consists of all $\sigma\in \Int(k\pow{G},+,\cdot)$ with $[\sigma(a)]_{v(a)}=a_{v(a)}$ for any $a\in k\pow{G}^\times$.
\end{definition}

\begin{remark}\label{rmk:intaut}
An automorphism $\sigma\in \vAut(k\pow{G},+,\cdot)$ is internal if and only if for any $s\in k\pow{G}$ the following holds:\ $v(\sigma(s))=v(s)$, and if $v(s)=0$, then $[\sigma(s)]_0=s_0$.
Thus, an automorphism $\sigma\in \vAut(k\pow{G},+,\cdot)$ is a $1$-automorphism if and only if for any $s\in k\pow{G}^\times$ we have $s_{v(s)}t^{v(s)}=\sigma(s)_{v(\sigma(s))}t^{v(\sigma(s))}$.
Alternatively, $\sigma$ is a $1$-automorphism if and only if $\sigma$ fixes the leading term of each $s \in k\pow{G}^\times$.
\end{remark}

Next we turn to the ordered additive group reduct $(k\pow{G},+,<)$ of $(k\pow{G},+,\cdot,<)$.
While the theory we briefly introduce below applies to the context of more general \textbf{Hahn groups}, for our purposes it suffices to focus on Hahn groups that are reducts of Hahn fields.
We refer the reader to~\cite[Chapter~2]{Se21} for the general case.

\begin{definition}
The group of all \textbf{valuation preserving (group) automorphisms} $\vAut(k\pow{G},+)$ consists of all automorphisms $\tau\in \Aut(k\pow{G},+)$ satisfying for any $g,h\in k\pow{G}$, 
\[
v(g)<v(h) \Leftrightarrow v(\tau(g)) < v(\tau(h)).
\]

Given $\tau\in \vAut(k\pow{G},+)$, the induced automorphisms $\tau_{G}\in \Aut(G,<)$ and $\tau_\gamma\in \Aut(k,+)$ for each $\gamma\in G$ are given by
\begin{align*}
\tau_{G}\colon& G\to G, \gamma\mapsto v(\tau(t^{\gamma})),\\
\tau_\gamma\colon& k\to k, a\mapsto [\tau(at^{\gamma})]_{\tau_{G}(\gamma)}.
\end{align*}
\end{definition}

\begin{definition}
The group $\mathrm{Ext}(k\pow{G},+)$ of \textbf{external (group) automorphisms} consists of all automorphisms on $(k\pow{G},+)$ of the form
$$\sum_{\gamma\in G}s_\gamma t^\gamma \mapsto \sum_{\gamma\in G}\eta_\gamma(s_\gamma)t^{\zeta(\gamma)}$$
for some $\zeta\in \Aut(G,<)$ and some $\eta_\gamma\in\Aut(k,+)$ for each $\gamma\in G$.
The subgroup $\mathrm{Ext}(k\pow{G},+,<)$ of $\mathrm{Ext}(k\pow{G},+)$ consists of all order preserving external (group) automorphisms.
The group $\Int(k\pow{G},+)$ of \textbf{internal (group) automorphisms} consists of all $\tau\in \vAut(k\pow{G},+)$ with $\tau_G=\id_G$ and $\tau_\gamma=\id_k$ for each $\gamma\in G$.
\end{definition}

\begin{remark}\label{rmk:intgroup}
An automorphism $\tau\in \vAut(k\pow{G},+)$ is internal if and only if for any $s\in k\pow{G}^\times$ we have $s_{v(s)}t^{v(s)}=[\tau(s)]_{v(\tau(s))}t^{v(\tau(s))}$.
Thus,
$\uAut(k\pow{G},+,\cdot)\leq \Int(k\pow{G},+)$, and indeed,
\[
\Int(k\pow{G},+) \cap \Int(k\pow{G},+,\cdot) = \uAut(k\pow{G},+,\cdot),
\]
where the above groups are viewed as subgroups of the ambient group $\Aut(k\pow{G},+,\cdot)$.
Of course, there are many automorphisms in $\Int(k\pow{G},+)\setminus \uAut(k\pow{G},+,\cdot)$, such as multiplication by $1+\epsilon$ for any nonzero $\epsilon \in k\pow{G}$ with $v(\epsilon)>0$.
Note that any $\tau\in \Int(k\pow{G},+)$ is order preserving, as $[\tau(s)]_{v(\tau(s))}>0$ if and only if $s_{v(s)}>0$.
\end{remark}

\subsection{Surreal numbers}

The theory introduced in \Cref{sec:prelims:aut} applies to $\No$ by setting $(k,+,\cdot,<)=(\R,+,\cdot,<)$ and $(G,+,<)=(\No,+,<)$.
We do so by means of the Conway normal form representation and by identifying $\omega^{-1}$ with the formal variable $t$ in our generalized power series notation.
In other words, we identify $(\No,+,\cdot,<)$ with the ordered Hahn field $(\R\pow{\No},+,\cdot,<)$ via the order preserving field isomorphism
$$\No\to \R\pow{\No}, \sum_{\beta<\alpha} r_\beta\cdot \omega^{y_\beta}\mapsto \sum_{\beta<\alpha} r_\beta t^{-y_\beta}.$$
Using this identification, we obtain
$$(\No,+,\cdot,<)=(\R\pow{\No},+,\cdot,<)\text{ and }(\No,+,<)=\left(\R\pow{\No},+,<\right).$$

\begin{remark}\label{rmk:stronghomog}
We note here that $(\No,+,\cdot,<)$, $(\No,+,<)$, and $(\No,<)$ are all \emph{strongly homogeneous}, meaning that for any set-sized subfield (respectively subgroup, subset) $K\subseteq\No$ and any ordered field (ordered group, ordered set) embedding $\sigma\colon K\to \No$, there is an ordered field (ordered group, ordered set) automorphism of $\No$ extending $\sigma$.
This holds more generally for any o-minimal expansion of $\No$ (see~\cite[Lemma~6.14]{Kr19}) such as the surreal exponential field, which will be considered in \Cref{sec:omega}.
For more on homogeneity when working with proper class-sized models, see~\cite{Eh89}.
\end{remark}

\section{Decomposing the automorphism group}

The first aim of this section is to apply the decomposition of $\vAut(k\pow{G},+,\cdot)$ into four factors as presented in~\cite[Theorem~3.7.1]{ks22} to the field $\No$.
To begin with, we show in \Cref{prop:vautisaut} below that $\vAut(\No,+,\cdot)$ is already the full automorphism group $\Aut(\No,+,\cdot)$ of $\No$.

\begin{lemma}[{\cite[Proposition~1.5.6]{KS22}}]\label{lem:rcimpliesorder}
Let $F$ be a real closed field.
Then any $\sigma\in \Aut(F,+,\cdot)$ is order preserving, i.e.\ $\sigma$ preserves the unique order $<$ on $F$.
Thus, $\Aut(F,+,\cdot)=\Aut(F,+,\cdot,<)$.
\end{lemma}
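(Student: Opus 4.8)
The plan is to use the standard fact that in a real closed field the ordering is definable, hence preserved by any field automorphism. First I would recall that if $F$ is real closed, then its unique ordering is given by
\[
x < y \quad\Longleftrightarrow\quad x \neq y \ \wedge\ \exists z\, (z^2 = y - x).
\]
Indeed, in a real closed field every positive element has a square root, and no negative element does (a sum $z^2$ is always $\geq 0$, and if $z^2 = -a$ with $a>0$ then $-1$ would be a sum of squares, contradicting formal reality). So the set of positive elements is exactly the set of nonzero squares.

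Next I would let $\sigma \in \Aut(F,+,\cdot)$ and take $x, y \in F$ with $x < y$. Then $y - x$ is a nonzero square, say $y - x = z^2$. Applying $\sigma$, which preserves $+$, $\cdot$ and sends $0$ to $0$, we get $\sigma(y) - \sigma(x) = \sigma(z)^2$, and $\sigma(z) \neq 0$ since $\sigma$ is injective and $z \neq 0$. Hence $\sigma(y) - \sigma(x)$ is a nonzero square, so $\sigma(x) < \sigma(y)$ by the characterization above. This shows $\sigma$ is order preserving. Since the inclusion $\Aut(F,+,\cdot,<) \subseteq \Aut(F,+,\cdot)$ is trivial and we have just shown the reverse, the two groups coincide.

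There is no real obstacle here; the only thing to be slightly careful about is the justification that the nonzero squares are exactly the positive elements, which is precisely the defining feature of real closed fields (every positive element has a square root) combined with formal reality (the field admits an ordering, so $-1$ is not a sum of squares, in particular not a square). I would state this as a one-line justification rather than reproving it. If desired, one can phrase the whole argument more abstractly: the ordering on a real closed field is $\emptyset$-definable in the language of rings, and automorphisms preserve $\emptyset$-definable relations; but the explicit square-root argument is shorter and entirely self-contained.
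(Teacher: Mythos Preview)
Your proposal is correct and follows essentially the same approach as the paper: both use that in a real closed field positive elements are precisely the nonzero squares, so a field automorphism sends positives to positives. The paper's proof is just a terser version of yours, writing $a>0 \Rightarrow a=c^2 \Rightarrow \sigma(a)=\sigma(c)^2>0$ without spelling out the converse characterization or the definability remark.
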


\begin{lemma}\label{lem:orderimpliesvalue}
Any $\tau\in \Aut(G,+,<)$ preserves $v_G$, i.e.\ for any $a,b\in G$ we have $v_G(a)<v_G(b)$ if and only if $v_G(\tau(a))<v_G(\tau(b))$.
\end{lemma}

\begin{proof}
Let $a,b\in G$.
Then
\begin{align*}
v_G(a)<v_G(b) & \Leftrightarrow \forall n\in \N\colon |a|>n|b|\\
& \Leftrightarrow \forall n\in \N\colon |\tau(a)|>n|\tau(b)|\\
& \Leftrightarrow v_G(\tau(a))<v_G(\tau(b)).\qedhere
\end{align*}
\end{proof}

\begin{proposition}\label{prop:vautisaut}
Suppose that $k\pow{G}$ is real closed and that $k$ is archimedean.
Then 
$\Aut(k\pow{G},+,\cdot)=\vAut(k\pow{G},+,\cdot)$.
In particular,
$$\Aut(\No,+,\cdot)=\vAut(\No,+,\cdot).$$
\end{proposition}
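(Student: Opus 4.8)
The plan is to deduce this from two facts: first, that every field automorphism of a real closed field is order preserving (which is exactly \Cref{lem:rcimpliesorder}); and second, that in a real closed Hahn field the $t$-adic valuation is determined by the ordered field structure, so that an order-preserving automorphism is automatically valuation preserving. Since $\vAut(k\pow{G},+,\cdot)$ is by definition a subgroup of $\Aut(k\pow{G},+,\cdot)$, it suffices to prove the inclusion $\Aut(k\pow{G},+,\cdot)\subseteq\vAut(k\pow{G},+,\cdot)$. The assertion about $\No$ then follows by specializing $k=\R$ and $G=\No$: here $\R\pow{\No}$ is real closed (as $\R$ is real closed and $\No$ is divisible), and crucially the residue field $\R$ is archimedean.

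Concretely, given $\sigma\in\Aut(k\pow{G},+,\cdot)$, I would first invoke \Cref{lem:rcimpliesorder} to conclude that $\sigma$ preserves the unique ordering on $k\pow{G}$; in particular $\sigma$ is an order-preserving automorphism of the ordered abelian group $(k\pow{G},+,<)$. By \Cref{lem:orderimpliesvalue} (applied with the ordered abelian group taken to be $(k\pow{G},+,<)$), $\sigma$ then preserves the natural valuation of $k\pow{G}$. When the residue field $k$ is archimedean --- the case relevant for $\No$ --- this natural valuation is exactly the $t$-adic valuation $v$ (as recorded in \Cref{sec:valued-fields}), and hence $\sigma$ preserves $v$; equivalently, its valuation ring $\cO_v=\{x\in k\pow{G}\mid |x|<n\text{ for some }n\in\N\}$ is the convex hull of the prime subring, which $\sigma$ fixes setwise because $\sigma$ fixes each $n\in\N$ and respects $<$. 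Being a ring automorphism, $\sigma$ then also fixes the maximal ideal $\fm_v=\{x\in\cO_v\mid x=0\text{ or }x^{-1}\notin\cO_v\}$.

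To finish, I would convert $\sigma(\cO_v)=\cO_v$ (equivalently $\sigma(\fm_v)=\fm_v$) into the defining condition for $\vAut$. For nonzero $a,b\in k\pow{G}$,
\[
v(a)<v(b)\ \Longleftrightarrow\ v(b/a)>0\ \Longleftrightarrow\ b/a\in\fm_v\ \Longleftrightarrow\ \sigma(b)/\sigma(a)\in\fm_v\ \Longleftrightarrow\ v(\sigma(a))<v(\sigma(b)),
\]
while the cases in which $a$ or $b$ equals $0$ are immediate from $\sigma(0)=0$. Thus $\sigma\in\vAut(k\pow{G},+,\cdot)$.

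The one genuinely delicate point is the case of a non-archimedean residue field $k$, where the $t$-adic valuation $v$ is strictly coarser than the natural valuation, so the argument above only shows that $\sigma$ preserves the latter. One then has to single out $v$ among the coarsenings of the natural valuation: these coarsenings form a chain (indexed by the convex subgroups of the value group of the natural valuation) on which $\sigma$ acts by an order-automorphism, and one must check that this action fixes the element corresponding to $v$ --- for instance, if $G$ is archimedean then $v$ corresponds to the largest proper convex subgroup of that value group, which is order-theoretically canonical and hence fixed. This is the part I would expect to require the most care; for the surreal numbers themselves it is vacuous, since $k=\R$.
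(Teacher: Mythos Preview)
Your argument is essentially the paper's: invoke \Cref{lem:rcimpliesorder} to get that $\sigma$ preserves the order, then \Cref{lem:orderimpliesvalue} (applied to the ordered group $(k\pow{G},+,<)$) to conclude that $\sigma$ preserves the natural valuation, hence lies in $\vAut(k\pow{G},+,\cdot)$. Your extra paragraph translating $\sigma(\cO_v)=\cO_v$ into the defining condition for $\vAut$ is correct but not something the paper spells out.

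Where you go beyond the paper is in flagging that \Cref{lem:orderimpliesvalue} only yields preservation of the \emph{natural} valuation, which agrees with the $t$-adic valuation $v$ precisely when $k$ is archimedean (as recorded in \Cref{sec:valued-fields}). The paper's proof passes over this distinction silently; it simply writes ``$\sigma$ is valuation preserving'' and concludes $\sigma\in\vAut(k\pow{G},+,\cdot)$. So your caution about the non-archimedean residue field is well placed, and your proof for the case $k=\R$ (hence for $\No$) is complete and matches the paper's intended argument. Your sketch for general $k$---locating $v$ among the coarsenings of the natural valuation and arguing it is fixed---is a reasonable line of attack, but as you note it is not carried through; the paper does not address this case explicitly either.
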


\begin{proof}
Let $\sigma\in \Aut(k\pow{G},+,\cdot)$.
\Cref{lem:rcimpliesorder} implies that $\sigma\in \Aut(k\pow{G},+,<)$.
Thus by \Cref{lem:orderimpliesvalue}, $\sigma$ is valuation preserving.
We obtain $\sigma\in\vAut(k\pow{G},+,\cdot)$.
\end{proof}

In order to apply~\cite[Theorem~3.7.1]{ks22}, we need to check that $\No=\R\pow{\No}$ satisfies the first and canonical second lifting property.
We refer the reader to~\cite[Definitions~3.1.3 \& 3.3.5]{ks22} for the definitions of lifting properties.
Indeed,~\cite[Example~3.1.4~(i)]{ks22} yields that $\R\pow{\No}$ has the first lifting property, and by~\cite[Example~3.3.9~(i)]{ks22} it has the canonical second lifting property.

\begin{proposition}\label{prop:firstdecomp}
\begin{align*}
\Aut(\No,+,\cdot) \simeq [\uAut(\No,+,\cdot) \rtimes \Hom((\No,+),(\R^\times,\cdot))] \rtimes \Aut(\No,+,<).
\end{align*}
\end{proposition}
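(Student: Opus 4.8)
The plan is to obtain the decomposition by specializing the four-factor decomposition of $\vAut(k\pow{G},+,\cdot)$ in \cite[Theorem~3.7.1]{ks22} to the Hahn field $\R\pow{\No}$, i.e.\ to $k=\R$ and $G=\No$. Two preliminaries legitimize this. First, by \Cref{prop:vautisaut} every field automorphism of $\No$ is valuation preserving, so $\Aut(\No,+,\cdot)=\vAut(\No,+,\cdot)$ and it suffices to decompose the latter. Second, as recorded just before the statement, $\R\pow{\No}$ has the first lifting property \cite[Example~3.1.4~(i)]{ks22} and the canonical second lifting property \cite[Example~3.3.9~(i)]{ks22}; these are exactly the hypotheses of \cite[Theorem~3.7.1]{ks22}.

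Invoking that theorem with $k=\R$ and $G=\No$ yields an isomorphism
\[
\Aut(\No,+,\cdot) \simeq \bigl[\uAut(\R\pow{\No},+,\cdot) \rtimes \Hom((\No,+),(\R^\times,\cdot))\bigr] \rtimes \bigl[\Aut(\R,+,\cdot) \times \Aut(\No,+,<)\bigr],
\]
in which the bracketed group $\uAut(\R\pow{\No},+,\cdot) \rtimes \Hom((\No,+),(\R^\times,\cdot))$ is the normal subgroup $\Int(\R\pow{\No},+,\cdot)$ of internal automorphisms, while the complementary factor is realized as the group $\mathrm{Ext}(\R\pow{\No},+,\cdot)$ of external automorphisms via the isomorphism $(\rho,\tau) \mapsto \bigl(\sum s_g t^g \mapsto \sum \rho(s_g) t^{\tau(g)}\bigr)$. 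Since $\R\pow{\No}$ is identified with $\No$ in such a way that its value group is the ordered abelian group $(\No,+,<)$, the factor $\Aut(\No,+,<)$ carries its intended meaning.

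It remains to note that $\Aut(\R,+,\cdot)$ is trivial: a field automorphism of $\R$ fixes the prime field $\Q$ pointwise, is order preserving by \Cref{lem:rcimpliesorder} (as $\R$ is real closed), and is therefore forced to be the identity on all of $\R$ by density of $\Q$. Hence $\Aut(\R,+,\cdot) \times \Aut(\No,+,<) \simeq \Aut(\No,+,<)$ --- concretely, $\mathrm{Ext}(\R\pow{\No},+,\cdot)$ consists exactly of the lifts $\sum s_g t^g \mapsto \sum s_g t^{\tau(g)}$ of the $\tau \in \Aut(\No,+,<)$ --- and the four-factor decomposition collapses to the three-factor one in the statement.

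I do not anticipate a genuine obstacle: the substantive content is imported from \cite[Theorem~3.7.1]{ks22}, and what remains is bookkeeping --- checking the lifting hypotheses (done), passing from all automorphisms to valuation-preserving ones via \Cref{prop:vautisaut}, and observing that the coefficient-field factor $\Aut(\R,+,\cdot)$ vanishes because $\R$ is rigid. The one place warranting a little care is transcribing the precise semidirect-product structure (which factor normalizes which, and with which action) from \cite{ks22}; this is unambiguous once one recalls that $\Int(\R\pow{\No},+,\cdot)$ is normal in $\vAut(\R\pow{\No},+,\cdot)$ and that the external automorphisms furnish a retraction onto the quotient.
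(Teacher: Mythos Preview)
Your proof is correct and follows essentially the same approach as the paper: invoke the lifting properties to apply \cite[Theorem~3.7.1]{ks22}, use \Cref{prop:vautisaut} to identify $\Aut(\No,+,\cdot)$ with $\vAut(\No,+,\cdot)$, and collapse the external factor using the rigidity of $\R$. The paper's version is terser---it simply asserts that $\Aut(\R,+,\cdot)$ is trivial without spelling out the argument---but the content is identical.
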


\begin{proof}
As $\No=\R\pow{\No}$ has the first and canonical second lifting property, we can apply~\cite[Theorem~3.7.1]{ks22} to obtain
\begin{align*}\vAut(\No,+,\cdot)&\simeq \Int(\No,+,\cdot) \rtimes \mathrm{Ext}(\No,+,\cdot)\\ 
\ & \simeq [\uAut(\No,+,\cdot) \rtimes \Hom((\No,+),(\R^\times,\cdot))] \rtimes [\Aut(\R,+,\cdot)\times \Aut(\No,+,<)].\end{align*}
It remains to apply \Cref{prop:vautisaut} and to note that $\Aut(\R,+,\cdot)$ is the trivial group, as any automorphism on the field of real numbers is already the identity.
\end{proof}

The factor $\Hom((\No,+),(\R^{\times},\cdot))$ can be further simplified.

\begin{lemma}\label{lem:expcorresp}
$\Hom((\No,+),(\R^{\times},\cdot))\simeq \Hom((\No,+),(\R,+))$.
\end{lemma}

\begin{proof}
Let $\chi\in \Hom((\No,+),(\R^\times,\cdot))$.
Then for any $a\in \No$ we have
$\chi(a)=\chi(2\cdot a/2)=\chi(a/2)^2\in \R^{>0}$.
We obtain $\Hom((\No,+),(\R^\times,\cdot)) = \Hom((\No,+),(\R^{>0},\cdot))$.
The lemma follows immediately from the fact that the standard exponential map $\exp$ on $\R$ is a group isomorphism from $(\R,+)$ to $(\R^{>0},\cdot)$.
\end{proof}

\begin{corollary}\label{cor:seconddecomp}
$\Aut(\No,+,\cdot) \simeq [\uAut(\No,+,\cdot) \rtimes \Hom((\No,+),(\R,+))] \rtimes \Aut(\No,+,<).$
\end{corollary}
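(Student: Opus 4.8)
The plan is to read the statement off from \Cref{prop:firstdecomp} by substituting the isomorphism of \Cref{prop:expcorresp} for the middle factor. Indeed \Cref{prop:firstdecomp} already gives
\[
\Aut(\No,+,\cdot) \simeq [\uAut(\No,+,\cdot) \rtimes \Hom((\No,+),(\R^\times,\cdot))] \rtimes  \Aut(\No,+,<),
\]
and \Cref{prop:expcorresp} identifies $\Hom((\No,+),(\R^\times,\cdot))$ with $\Hom((\No,+),(\R,+))$; so the only thing to check is that this substitution is compatible with the two nested semidirect products.

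The concrete device is the isomorphism $\Phi\colon\Hom((\No,+),(\R,+))\to\Hom((\No,+),(\R^\times,\cdot))$, $\psi\mapsto\exp\circ\psi$, from the proof of \Cref{prop:expcorresp} (recall $\Hom((\No,+),(\R^\times,\cdot))=\Hom((\No,+),(\R^{>0},\cdot))$, so $\Phi$ is surjective). I would invoke the elementary fact that in an iterated semidirect product $[N_1\rtimes_\alpha N_2]\rtimes_\beta N_3$ one may replace the normal factor $N_2$ by an isomorphic group $N_2'$, along a chosen isomorphism $\phi\colon N_2'\to N_2$, and still obtain an isomorphic group, once the actions are transported (replace $\alpha$ by $\alpha\circ\phi$, and conjugate $\beta$ by the induced isomorphism $N_1\rtimes N_2'\to N_1\rtimes N_2$). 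Taking $N_1=\uAut(\No,+,\cdot)$, $N_2=\Hom((\No,+),(\R^\times,\cdot))$, $N_2'=\Hom((\No,+),(\R,+))$, $N_3=\Aut(\No,+,<)$, and $\phi=\Phi$ produces the asserted decomposition; checking that the resulting coordinatewise maps are homomorphisms is the routine bookkeeping behind this fact, and bijectivity is clear since only the $\Hom$-coordinate is changed, and there by the bijection $\Phi$.

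I do not expect a genuine obstacle: this is a corollary in the strict sense, and in fact $\Phi$ is compatible with the ambient conjugation actions of \cite[Theorem~3.7.1]{ks22}, so the transported actions agree with the original ones and no twisting is needed. Realizing a $\chi\in\Hom((\No,+),(\R^\times,\cdot))$ as the monomial-rescaling automorphism $\sum s_g t^g\mapsto\sum s_g\,\chi(g)\,t^g$, an external automorphism $\sum s_g t^g\mapsto\sum s_g t^{\tau(g)}$ conjugates it to the rescaling by $\chi\circ\tau^{-1}$, and post-composition by $\exp$ commutes with precomposition by $\tau^{-1}$; the residue-field automorphism being trivial, nothing else intervenes. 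Thus the factor $\Hom((\No,+),(\R,+))$ can be inserted directly, and the proof amounts to combining \Cref{prop:firstdecomp} with \Cref{prop:expcorresp}.
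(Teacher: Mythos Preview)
Your proposal is correct and matches the paper's approach: the paper states \Cref{cor:seconddecomp} as an immediate consequence of \Cref{prop:firstdecomp} and \Cref{prop:expcorresp} without further argument, and you carry out exactly this combination. Your extra bookkeeping verifying that the substitution via $\Phi=\exp\circ(-)$ is compatible with the nested semidirect products is more than the paper provides, but it is the natural check and is correctly done.
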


\Cref{prop:firstdecomp} and \Cref{cor:seconddecomp} are the first step of the structural decomposition of $\Aut(\No,+,\cdot)$ into three (non-trivial) factors.
Indeed, examples of non-trivial $1$-automorphisms are given in \Cref{ex:derivations}, we give a full description of $\Hom((\No,+),(\R,+))$ in \Cref{rem:homrno}, and we further decompose $\Aut(\No,+,<)$ in \Cref{prop:groupdec}.
First, let us demonstrate exactly how to construct an element of $\Aut(\No,+,\cdot)$ from members of these three groups.

\begin{construction}\label{constr:2}
Let $\sigma\in \uAut(\No,+,\cdot)$, $\phi\in \Hom((\No,+),(\R,+))$ and $\tau\in \Aut(\No,+,<)$.

Then we obtain an automorphism $\theta(\sigma,\phi,\tau)$ corresponding to the decomposition in \Cref{cor:seconddecomp} as follows.
\[
\theta(\sigma,\phi,\tau)\colon s=\sum_{\beta<\alpha} r_\beta t^{y_\beta}\mapsto \sigma\!\brackets{\sum_{\beta<\alpha} \exp(\phi(\tau(y_\beta))) r_\beta t^{\tau(y_\beta)}}\! = \exp(\phi(\tau(y_0)))r_0 t^{\tau(y_0)}(1+ \epsilon(s)),
\]
for some $\epsilon(s)\in\No$ with $v(\epsilon(s) )>0$.
\end{construction}

\begin{remark}\label{rem:homrno}
Since homomorphisms from $(\No,+)$ to $(\R,+)$ are exactly the $\Q$-linear maps from $\No$ to $\R$, the group $\Hom((\No, +), (\R, +))$ can be identified with $\R^{\mathcal{B}}$, provided a basis $\mathcal{B}$ of $\No$ as a $\mathbb{Q}$-vector space.
Now as $\mathcal{B}$ must be a proper class, it is in bijection with $\No$ by global choice, hence we obtain an isomorphism $(\Hom((\No,+),(\R,+)),+)\simeq(\R^{\No},+)$.
\end{remark}

We now turn to further decomposing the final factor $\Aut(\No,+,<)$ in \Cref{cor:seconddecomp}.
To this end, we now consider $\No$ as the ordered Hahn group $(\R\pow{\No},+,<)$.

\begin{proposition}\label{prop:groupdec}
\begin{align*}
\Aut(\No,+,<) &\simeq \Int(\No,+) \rtimes \mathrm{Ext}(\No,+,<)\\
 & \simeq \Int(\No,+) \rtimes \left[\Aut(\No,<) \times \prod_{\gamma\in \No} \Aut(\R,+,<)\right]\\
 & \simeq \Int(\No,+) \rtimes \left[\Aut(\No,<) \times \R^{\No}\right].
\end{align*}
\end{proposition}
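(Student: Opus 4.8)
The plan is to carry out for the ordered additive group $(\No,+,<)=(\R\pow{\No},+,<)$ the exact analogue of the argument behind \Cref{prop:firstdecomp}, using the Hahn-group decomposition theorem from \cite[Chapter~2]{Se21} in place of \cite[Theorem~3.7.1]{ks22}. The first thing to settle is that the relevant decomposition theorem even applies, i.e.\ that $\Aut(\No,+,<)$ already consists of valuation preserving automorphisms. Applying \Cref{lem:orderimpliesvalue} to the ordered abelian group $(\R\pow{\No},+,<)$, every $\tau\in\Aut(\No,+,<)$ preserves its natural valuation; since $\R$ is archimedean, this natural valuation coincides with the $t$-adic valuation $v$ (see \Cref{sec:valued-fields}), so $\tau\in\vAut(\No,+)$. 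Hence $\Aut(\No,+,<)$ is precisely the subgroup of order preserving automorphisms of $\vAut(\No,+)$.

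Next I would invoke the group counterpart of \cite[Theorem~3.7.1]{ks22}. This requires verifying that $\R\pow{\No}$, viewed as a Hahn \emph{group}, satisfies the first and canonical second lifting properties; these are the group analogues of the conditions checked just before \Cref{prop:firstdecomp}, and the relevant instances are established as in \cite[Chapter~2]{Se21} (the group versions of \cite[Examples~3.1.4~(i) and 3.3.9~(i)]{ks22}). The theorem then gives $\vAut(\No,+)\simeq\Int(\No,+)\rtimes\mathrm{Ext}(\No,+)$. To descend to $\Aut(\No,+,<)$, recall from \Cref{rmk:intgroup} that every internal automorphism is order preserving; thus, writing an element of the semidirect product as $\iota\circ e$ with $\iota\in\Int(\No,+)$ and $e\in\mathrm{Ext}(\No,+)$, it is order preserving if and only if $e=\iota^{-1}\circ(\iota\circ e)$ is, i.e.\ if and only if $e\in\mathrm{Ext}(\No,+,<)$. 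Since $\Int(\No,+)$ remains normal in the resulting subgroup, combining this with the first paragraph yields the first isomorphism $\Aut(\No,+,<)\simeq\Int(\No,+)\rtimes\mathrm{Ext}(\No,+,<)$.

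It then remains to identify $\mathrm{Ext}(\No,+,<)$. By definition its elements are the maps $\sum_{\gamma}s_\gamma t^\gamma\mapsto\sum_{\gamma}\eta_\gamma(s_\gamma)t^{\zeta(\gamma)}$ with $\zeta\in\Aut(\No,<)$ and $\eta_\gamma\in\Aut(\R,+)$ for each $\gamma$. As $\zeta$ is order preserving, such a map preserves $<$ exactly when each $\eta_\gamma$ does: evaluating on monomials $rt^{\gamma}$ with $r>0$ forces $\eta_\gamma(r)>0$, and conversely, if each $\eta_\gamma\in\Aut(\R,+,<)$ then the leading term of any $x=\sum s_\gamma t^\gamma$ is sent to $\eta_{v(x)}(x_{v(x)})t^{\zeta(v(x))}$, whose coefficient is positive iff $x_{v(x)}>0$. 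Hence $E\mapsto(\zeta,(\eta_\gamma)_{\gamma\in\No})$ is a bijection from $\mathrm{Ext}(\No,+,<)$ onto $\Aut(\No,<)\times\prod_{\gamma\in\No}\Aut(\R,+,<)$; computing the composition of two external automorphisms exhibits the group operation on the right-hand side and shows this bijection is a group isomorphism, which is the second isomorphism. For the third, observe that any $\eta\in\Aut(\R,+,<)$ is $\Q$-linear and increasing, hence $\eta(x)=\eta(1)x$ with $\eta(1)>0$, so $\Aut(\R,+,<)\simeq(\R^{>0},\cdot)\simeq(\R,+)$; taking the product over $\gamma\in\No$ gives $\prod_{\gamma\in\No}\Aut(\R,+,<)\simeq\R^{\No}$.

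The bulk of the real work sits in the first two paragraphs: pinning down the correct group-theoretic lifting properties for $\R\pow{\No}$ and confirming that the general Hahn-group decomposition theorem of \cite{Se21} applies verbatim, together with the bookkeeping needed to track order preservation through the successive identifications. The final identifications---that order preserving additive automorphisms of $\R$ are scalar multiplications, and the resulting product descriptions of $\mathrm{Ext}(\No,+,<)$---are routine.
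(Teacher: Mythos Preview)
Your proposal is correct and follows essentially the same route as the paper: apply the Hahn-group decomposition $\vAut(\No,+)\simeq\Int(\No,+)\rtimes\mathrm{Ext}(\No,+)$ from \cite{Se21,ks23}, use \Cref{rmk:intgroup} and \Cref{lem:orderimpliesvalue} to restrict to order preserving automorphisms, then identify $\mathrm{Ext}(\No,+,<)$ and $\Aut(\R,+,<)$ exactly as you describe. One minor correction: in the Hahn-\emph{group} setting only a single canonical lifting property is needed (see \cite[Examples~2.2.3~(i)]{Se21}), not two separate first and second lifting properties as in the field case.
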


\begin{proof}
The Hahn group $(\R\pow{\No},+)$ has the canonical lifting property (see~\cite[Examples~2.2.3~(i)]{Se21} for details).
Thus,
$$\vAut(\No,+) \simeq \Int(\No,+) \rtimes \mathrm{Ext}(\No,+)$$
(see~\cite[Theorem~2.2.17]{Se21} and~\cite[Theorem~3.15]{ks23}).
Moreover, the isomorphism $\Xi\colon \Int(\No,+) \rtimes \mathrm{Ext}(\No,+) \to \vAut(\No,+)$ is given by $(\tau_1,\tau_2)\mapsto \tau_1\circ\tau_2$.
Now $\Int(\No,+)\leq\Aut(\No,+,<)\leq\vAut(\No,+)$ by \Cref{rmk:intgroup} and 
\Cref{lem:orderimpliesvalue}, respectively.
Hence,
\begin{align*}
\Xi^{-1}(\Aut(\No,+,<))&=\{(\tau_1,\tau_2)\in \Int(\No,+) \rtimes \mathrm{Ext}(\No,+)\mid \tau_1\circ\tau_2 \text{ is order preserving}\}\\
&=\{(\tau_1,\tau_2)\in \Int(\No,+) \rtimes \mathrm{Ext}(\No,+)\mid \tau_2 \text{ is order preserving}\}\\
&=\Int(\No,+) {\rtimes} \mathrm{Ext}(\No,+,<),
\end{align*}
establishing the first isomorphism.

Next, the group of external automorphisms decomposes as $$\mathrm{Ext}(\No,+)\simeq\Aut(\No,<) \times \prod_{\gamma\in \No} \Aut(\R,+)$$
via the isomorphism 
$$\Theta\colon \Aut(\No,<) \times \prod_{\gamma\in \No} \Aut(\R,+)\to \mathrm{Ext}(\No,+), \brackets{\zeta,(\eta_\gamma)_{\gamma\in \No}}\mapsto \left[\sum_{\gamma\in \No}s_\gamma t^\gamma \mapsto \sum_{\gamma\in \No}\eta_\gamma(s_\gamma)t^{\zeta(\gamma)}\right]\!.$$
For the group of order preserving external automorphisms, we obtain
\begin{align*}
&\Theta^{-1}(\mathrm{Ext}(\No,+,<))\\
=&\left\{\left.\brackets{\zeta,(\eta_\gamma)_{\gamma\in \No}}\in \Aut(\No,<) \times \prod_{\gamma\in \No}\Aut(\R,+)\ \right| \forall\gamma\in\No\ \forall r\in\R\colon [\eta_\gamma(r)>0\Leftrightarrow r>0] \right\}\\
=&\Aut(\No,<) \times \prod_{\gamma\in \No} \Aut(\R,+,<),
\end{align*}
yielding the second isomorphism.

Finally, note that the group $\Aut(\R,+,<)$ consists of all maps $\R\to\R, x\mapsto ax$ for some $a\in \R^{>0}$.
This is due to the fact that any map in $\Aut(\R,+,<)$ is a monotone solution of Cauchy's (basic) functional equation (see~\cite[Section~2.1]{Ac66}).
Hence, $\Aut(\R,+,<)\simeq (\R^{>0},\cdot)$.
As $(\R^{>0},\cdot)$ is isomorphic to $(\R,+)$ via the real logarithm map $\log$, we obtain 
$$\prod_{\gamma\in \No} \Aut(\R,+,<)\simeq \prod_{\gamma\in \No} (\R,+) =(\R^{\No},+),$$
as required.
\end{proof}

Recall from \Cref{constr:2} how $\theta(\sigma,\phi,\tau)\in \Aut(\No,+,\cdot)$ arises from some $\sigma\in \uAut(\No,\allowbreak+,\allowbreak\cdot)$, $\phi\in \Hom((\No,+),(\R,+))$ and $\tau\in \Aut(\No,+,<)$.
Based on the decomposition of $\Aut(\No,+,<)$ in \Cref{prop:groupdec}, we refine $\tau$ in the following construction as $\tau(\nu,\zeta,g)$ for some internal group automorphism $\nu\in\Int(\No,+)$, some order preserving bijection $\zeta\in \Aut(\No,<)$ and some map $g\colon \No\to \R$.

\begin{construction}\label{constr:3}
Let $\nu\in\Int(\No,+)$, $\zeta\in \Aut(\No,<)$ and $g\in \R^{\No}$.
We construct $\tau(\nu,\zeta,g)$ following the decomposition presented in \Cref{prop:groupdec}:
\[
\tau(\nu,\zeta,g)\colon \sum_{\beta<\alpha}r_\beta t^{y_\beta} \mapsto \nu\!\brackets{\sum_{\beta<\alpha}\exp(g(y_\beta))r_\beta t^{\zeta(y_\beta)}}\! = \exp(g(y_0))r_0 t^{\zeta(y_0)}(1+\epsilon),
\]
for some $\epsilon\in\No$ with $v(\epsilon )>0$.
\end{construction}

Finally, by combining \Cref{cor:seconddecomp} and \Cref{prop:groupdec}, we obtain our main decomposition result.

\begin{theorem}\label{thm:main}
$$\Aut(\No,+,\cdot) \simeq \left[\uAut(\No,+,\cdot) \rtimes \Hom((\No,+),(\R,+))\right] \rtimes \left[\Int(\No,+) \rtimes \left[\Aut(\No,<) \times \R^{\No}\right]\right].$$
\end{theorem}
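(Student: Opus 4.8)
The plan is to assemble the statement directly from the two decompositions already established, so there is essentially nothing new to prove. I would start from \Cref{cor:seconddecomp},
\[
	\Aut(\No,+,\cdot) \simeq \left[\uAut(\No,+,\cdot) \rtimes \Hom((\No,+),(\R,+))\right] \rtimes \Aut(\No,+,<),
\]
which presents $\Aut(\No,+,\cdot)$ as a semidirect product whose normal factor is isomorphic to $\uAut(\No,+,\cdot) \rtimes \Hom((\No,+),(\R,+))$ and whose complement is isomorphic to $\Aut(\No,+,<)$; concretely, this complement is the subgroup $\mathrm{Ext}(\No,+,\cdot) \leq \Aut(\No,+,\cdot)$ of external field automorphisms $\sum_{g\in\No} s_g t^g \mapsto \sum_{g\in\No} s_g t^{\tau(g)}$, which is isomorphic to $\Aut(\No,+,<)$ via $\tau\mapsto\bigl(\sum_{g\in\No} s_g t^g \mapsto \sum_{g\in\No} s_g t^{\tau(g)}\bigr)$ (the factor $\Aut(\R,+,\cdot)$ from $\mathrm{Ext}$ being trivial). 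The second ingredient is \Cref{prop:groupdec}, which decomposes precisely this complement as
\[
	\Aut(\No,+,<) \simeq \Int(\No,+) \rtimes \left[\Aut(\No,<) \times \R^{\No}\right].
\]

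The remaining step is to substitute the second display into the first. Here I would use the elementary fact that in a semidirect product $A \rtimes_\alpha B$ with action $\alpha\colon B \to \Aut(A)$ one may replace the complement by any isomorphic group: if $\psi\colon B' \to B$ is an isomorphism, then $(a,b')\mapsto(a,\psi(b'))$ is an isomorphism $A \rtimes_{\alpha\circ\psi} B' \to A \rtimes_\alpha B$. Taking $A = \uAut(\No,+,\cdot) \rtimes \Hom((\No,+),(\R,+))$, $B = \Aut(\No,+,<)$, and $\psi$ the isomorphism of \Cref{prop:groupdec}, and combining with \Cref{cor:seconddecomp}, I would read off
\[
	\Aut(\No,+,\cdot) \simeq \left[\uAut(\No,+,\cdot) \rtimes \Hom((\No,+),(\R,+))\right] \rtimes \left[\Int(\No,+) \rtimes \left[\Aut(\No,<) \times \R^{\No}\right]\right],
\]
which is the assertion.

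I do not expect a genuine obstacle here: all the substance is in \Cref{cor:seconddecomp} and \Cref{prop:groupdec}, and what remains is only the bookkeeping of nested semidirect products. The single point I would double-check is that the factor ``$\Aut(\No,+,<)$'' appearing as the outer complement in \Cref{cor:seconddecomp} is literally the ordered-group automorphism group that \Cref{prop:groupdec} decomposes — which it is, since both refer to automorphisms of $\No$ viewed as the Hahn group $(\R\pow{\No},+,<)$ under the identification fixed in \Cref{sec:prelims}.
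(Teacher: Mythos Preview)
Your proposal is correct and matches the paper's approach exactly: the paper derives \Cref{thm:main} simply by combining \Cref{cor:seconddecomp} with \Cref{prop:groupdec}, and your only addition is the (entirely standard) observation that one may replace the complement in a semidirect product by an isomorphic copy. There is nothing to change.
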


Given $\sigma\in \uAut(\No,+,\cdot)$, $\phi\in\Hom((\No,+),(\R,+))$, $\nu\in\Int(\No,+)$, $\zeta\in\Aut(\No,<)$ and $g\in\R^{\No}$, one can combine \Cref{constr:2} and \Cref{constr:3} to obtain an automorphism $\theta(\sigma,\phi,\tau(\nu,\zeta,g))\in \Aut(\No,+,\cdot)$ arising from the decomposition presented in \Cref{thm:main}.

Due to \Cref{rem:homrno}, we also obtain the following decomposition, which depends on the choice of a $\Q$-basis $\mathcal{B}$ of $\No$ and a bijection from $\mathcal{B}$ to $\No$.

\begin{corollary}
$$\Aut(\No,+,\cdot) \simeq \left[\uAut(\No,+,\cdot) \rtimes \R^{\No}\right] \rtimes \left[\Int(\No,+) \rtimes \left[\Aut(\No,<) \times \R^{\No}\right]\right].$$
\end{corollary}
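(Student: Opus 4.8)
The plan is to deduce this directly from \Cref{thm:main} by replacing the factor $\Hom((\No,+),(\R,+))$ with the isomorphic group $\R^{\No}$. First I would fix, as in \Cref{prop:homrno}, a $\Q$-basis $\mathcal{B}$ of $\No$ together with a bijection $f_{\mathcal{B}}\colon\mathcal{B}\to\No$, and let $\psi\colon(\R^{\No},+)\to(\Hom((\No,+),(\R,+)),+)$ be the inverse of the composite isomorphism $\phi\mapsto(\phi|_{\mathcal{B}})\circ f_{\mathcal{B}}^{-1}$ from \Cref{prop:homrno}; explicitly, $\psi(h)$ is the unique $\Q$-linear map $\No\to\R$ whose restriction to $\mathcal{B}$ equals $h\circ f_{\mathcal{B}}$.

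Next I would invoke the elementary group-theoretic fact that a factor of an iterated semidirect product may be replaced by an isomorphic group once the pertinent actions are transported along the isomorphism. Concretely, write the right-hand side of \Cref{thm:main} as $N\rtimes Q$, where $N=\uAut(\No,+,\cdot)\rtimes\Hom((\No,+),(\R,+))$ is the normal factor (and $\uAut(\No,+,\cdot)$ is in turn the normal factor of $N$) and $Q=\Int(\No,+)\rtimes[\Aut(\No,<)\times\R^{\No}]$. Pulling the action of $\Hom((\No,+),(\R,+))$ on $\uAut(\No,+,\cdot)$ back along $\psi$ turns $\psi$ into an isomorphism $\Phi\colon\uAut(\No,+,\cdot)\rtimes\R^{\No}\to N$ which is the identity on $\uAut(\No,+,\cdot)$ and $\psi$ on $\R^{\No}$; transporting the $Q$-action on $N$ along $\Phi$ then yields
$$\Aut(\No,+,\cdot)\ \simeq\ N\rtimes Q\ \simeq\ \bigl[\uAut(\No,+,\cdot)\rtimes\R^{\No}\bigr]\rtimes Q,$$
and unwinding $Q$ gives exactly the displayed decomposition.

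There is essentially no obstacle here: one only needs to check that $\Phi$ is an isomorphism and that the transported $Q$-action is a genuine action, and both are immediate from the definitions (the transported actions are chosen precisely so as to make the relevant maps homomorphisms). The one caveat — already noted in the statement — is that the resulting isomorphism is not canonical, since it depends on the choices of $\mathcal{B}$ and $f_{\mathcal{B}}$.
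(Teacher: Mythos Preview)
Your proposal is correct and takes essentially the same approach as the paper: the corollary is stated there without a formal proof, preceded only by the remark that it follows from \Cref{thm:main} via \Cref{prop:homrno} (and depends on the choice of $\mathcal{B}$ and a bijection $\mathcal{B}\to\No$). Your write-up simply spells out the transport-of-structure step that the paper leaves implicit.
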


Thus, the factor $(\R^{\No},+)$ appears twice in the decomposition.
We point out that the first of those two factors highly depends on a choice of a $\Q$-basis $\mathcal{B}$ of $\No$ and a bijection $f_{\mathcal{B}}\colon \mathcal{B}\to \No$, whereas the second factor arises naturally without any particular choice.

\section{Strongly additive automorphisms}\label{sec:str-additive}
An automorphism $\sigma \in \vAut(k\pow{G},+,\cdot)$ is said to be \textbf{strongly additive} if for all $\sum_{g \in G} s_g t^g \in k\pow{G}$, the family $(\sigma(s_g t^g))_{g \in G}$ is summable (that is to say, the class $\bigcup_{g \in G}\supp(\sigma(s_g t^g))$ is well-ordered and each $g \in G$ appears in only finitely many of these supports), and we have the equality
\[
\sigma\big( \sum_{g \in G} s_g t^g\big) = \sum_{g \in G} \sigma(s_g t^g).
\]
See~\cite[Section 3.4]{Se21} for more background on strongly additive maps.
Let $\vAut^+(k\pow{G},+,\cdot)$ denote the group of strongly additive valuation preserving automorphisms.
Note that every external field automorphism is strongly additive~\cite[Proposition 4.0.9]{ks22}.

A \textbf{strongly $k$-linear} automorphism of $(k\pow{G},+,\cdot)$ is a strongly additive automorphism that is the identity on $k$.
We denote by ${\vAut^+_k(k\pow{G},+,\allowbreak\cdot)}$ the group of strongly $k$-linear valuation preserving automorphisms.

\begin{remark}\label{ex:compositions}
One source of strongly linear automorphisms comes from the \emph{hyperseries program} of Bagayoko, van der Hoeven, and others~\cite{Ba22,Ba22B,BH21,BH22,BHK21,DHK19}.
Conjecturally, there is a composition law 
\[
(f,g) \mapsto f\circ g \colon \No \times \No^{>\R}\to \No
\]
 satisfying some natural conditions, one of which is that for any $g \in \No^{>\R}$, the map $\sigma_g \colon \No\to \No$ given by $\sigma_g(f) \coloneqq f\circ g$ is a strongly $\R$-linear automorphism (see~\cite[page 235]{Ba22B} for a precise conjecture).
\end{remark}

In order to study strongly linear automorphisms systematically, we first examine where they fit into our decomposition.
Let $\sigma \in \uAut(\No,+,\cdot)$, $\tau \in \Aut(\No,+,<)$, and $\phi \in \Hom((\No,+),(\R,+))$, and let $\theta = \theta(\sigma,\phi,\tau)$ be as in \Cref{constr:2}.
If $\sigma$ is the identity, then $\theta$ is strongly $\R$-linear.
Thus, in studying strongly additive and strongly $\R$-linear automorphisms of $\No$, it is enough to focus on $1$-automorphisms.

In~\cite{BKKPS24}, a correspondence is established between strongly $k$-linear $1$-automorphisms of $k\pow{G}$ and certain strongly $k$-linear derivations on $k\pow{G}$.
A \textbf{strongly $k$-linear derivation} is a strongly $k$-linear map 
\[
\partial \colon k\pow{G}\to k\pow{G}
\]
that satisfies $\partial(ab) = a\partial(b) + b \partial(a)$ for all $a,b \in k\pow{G}$.
A derivation $\partial$ is said to be \textbf{contracting} if $v(\partial a)>v(a)$ for all $a \in k\pow{G}\setminus\{0\}$.
The following correspondence was established in the case that $G$ is a set, but it also holds for $G$ a proper class.

\begin{fact}[{\cite[Theorem 3.13]{BKKPS24}}]\label{fact:deraut}
If $\partial$ is a contracting strongly $k$-linear derivation, denote by $\partial^i$ the $i$-th iterate of $\partial$.
The map $\exp(\partial) \colon k\pow{G}\to k\pow{G}$ given by 
\[
\exp(\partial)(a) \coloneqq \sum_{i=0}^\infty \partial^i(a)/i!
\]
is a strongly $k$-linear $1$-automorphism of $k\pow{G}$.
The map $\partial\mapsto \exp(\partial)$ is a bijection between the class of contracting strongly $k$-linear derivations and the class of strongly $k$-linear $1$-automorphisms.
\end{fact}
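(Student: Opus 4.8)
The plan is to split the statement into three parts: (i) for $\partial$ a contracting strongly $k$-linear derivation, $\exp(\partial)=\sum_{i\ge 0}\partial^i/i!$ is a well-defined strongly $k$-linear operator on $k\pow{G}$; (ii) $\exp(\partial)$ is a field automorphism that fixes the leading term of every element, i.e.\ a strongly $k$-linear $1$-automorphism; (iii) the assignment $\partial\mapsto\exp(\partial)$ is a bijection onto the strongly $k$-linear $1$-automorphisms, with inverse given by a logarithm series. Throughout write $\fm$ for the class of strongly $k$-linear operators $\tau$ on $k\pow{G}$ that are \emph{contracting}, meaning $v(\tau a)>v(a)$ for all $a\neq 0$; then $\fm$ is closed under $k$-linear combinations and under composition, and $\id+\fm$ is a group under composition with $(\id+\tau)^{-1}=\sum_{i\ge 0}(-1)^i\tau^i$. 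Part (i) rests on a summability lemma: if $\partial\in\fm$, then for every $a\in k\pow{G}$ the family $(\partial^i(a)/i!)_i$ is strongly summable. Since $\partial$ is contracting, the valuations $v(\partial^i a)$ strictly increase until they vanish, so each $g\in G$ lies in only finitely many $\supp(\partial^i a)$; the substantial point is that $\bigcup_i\supp(\partial^i a)$ is again well ordered, and this is where strong $k$-linearity is used in an essential way: from $\partial a=\sum_{g\in\supp a}a_g\,\partial(t^g)$ one has $\supp(\partial a)\subseteq\bigcup_{g\in\supp a}\supp(\partial(t^g))$, and strong linearity forces $(\partial(t^g))_{g\in\supp a}$ to be summable, a control one propagates through the iterates. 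I would either invoke this from the literature on strongly linear operators or reprove it by such a support analysis. Granting the lemma, $\exp(\partial)$ is a strongly summable sum of strongly $k$-linear operators, hence strongly $k$-linear, and it fixes $k$ pointwise since $\partial|_k=0$.

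For part (ii), multiplicativity follows from the Leibniz rule $\partial^n(ab)=\sum_{j=0}^n\binom{n}{j}\partial^j(a)\partial^{n-j}(b)$ together with a Fubini principle for strongly summable families: rearranging,
\[
\exp(\partial)(ab)=\Bigl(\sum_{j}\tfrac{\partial^j(a)}{j!}\Bigr)\Bigl(\sum_{\ell}\tfrac{\partial^\ell(b)}{\ell!}\Bigr)=\exp(\partial)(a)\,\exp(\partial)(b),
\]
and $\exp(\partial)(1)=1$ since $\partial(1)=0$. Thus $\exp(\partial)$ is a unital ring endomorphism of a field, hence injective, and it is bijective because $\exp(-\partial)$ is a two-sided inverse: $\partial$ and $-\partial$ commute, so $\exp(\partial)\circ\exp(-\partial)=\exp(0)=\id$, again a formal identity legitimized by strong summability. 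Finally, for $a\neq 0$ and $i\ge 1$ we have $v(\partial^i a)>v(a)$, so $v\bigl(\exp(\partial)(a)-a\bigr)>v(a)$; hence $v(\exp(\partial)(a))=v(a)$ and $\exp(\partial)$ fixes the leading term of every element, so it is a $1$-automorphism by \Cref{rmk:intaut}. With part (i), $\exp(\partial)$ is a strongly $k$-linear $1$-automorphism.

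For part (iii), let $\sigma$ be a strongly $k$-linear $1$-automorphism and set $\epsilon:=\sigma-\id$. Since $\sigma$ fixes leading terms, $\epsilon\in\fm$, so by the lemma of part (i) the series $\log(\sigma):=\sum_{i\ge 1}\frac{(-1)^{i+1}}{i}\epsilon^i$ is strongly summable and lies in $\fm$. The formal power-series identities $\log\exp(X)=X$ and $\exp\log(\id+X)=\id+X$ remain valid after substituting $X=\partial$ or $X=\epsilon$ in $\fm$, since every intermediate rearrangement is strongly summable; hence $\exp$ and $\log$ are mutually inverse bijections between $\fm$ and $\id+\fm$. In particular $\partial\mapsto\exp(\partial)$ is injective, and every strongly $k$-linear $1$-automorphism $\sigma$ equals $\exp(\log\sigma)$, so it only remains to check that $\log\sigma$ is a derivation. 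For this I would use the twisted Leibniz identity $\epsilon(ab)=a\epsilon(b)+b\epsilon(a)+\epsilon(a)\epsilon(b)$ coming from multiplicativity of $\sigma$: iterating it expresses each $\epsilon^n(ab)$ as a strongly summable combination of products $\epsilon^j(a)\epsilon^\ell(b)$, and substituting into the log series and reorganizing yields $\log(\sigma)(ab)=a\log(\sigma)(b)+b\log(\sigma)(a)$. Conceptually this says that applying the logarithm power series to the intertwining relation $\sigma\circ\mu=\mu\circ(\sigma\otimes\sigma)$, with $\mu$ the multiplication map, turns it into the derivation identity, since the logarithm of a grouplike operator is primitive and $\log(\sigma\otimes\sigma)=\log\sigma\otimes\id+\id\otimes\log\sigma$ for the commuting factors. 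Contracting-ness of $\log\sigma$ is immediate from $\epsilon\in\fm$, so $\log\sigma$ is a contracting strongly $k$-linear derivation and the correspondence is onto.

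The main obstacle throughout is the passage from purely formal identities — the Leibniz expansion, $\exp(X+Y)=\exp(X)\exp(Y)$ for commuting $X,Y$, $\log\exp=\id$, Fubini, and the intertwining identity for $\log\sigma$ — to honest equalities of strongly summable sums of operators on $k\pow{G}$ (and, for the last point, on $k\pow{G}\otimes_k k\pow{G}$ or directly on pairs $(a,b)$). Everything hinges on the summability lemma of part (i), which is precisely where the contracting hypothesis is needed; once that infrastructure is in place, each individual identity is routine bookkeeping.
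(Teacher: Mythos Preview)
The paper does not prove this statement: it is recorded as a \emph{Fact} and attributed to \cite[Theorem~3.17]{BKKPS24}, with no accompanying proof. So there is no in-paper argument to compare your proposal against.

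That said, your outline is the expected one and matches the strategy of the cited source: establish summability of $(\partial^i a)_i$ for contracting strongly $k$-linear $\partial$, deduce that $\exp(\partial)$ is a strongly $k$-linear ring endomorphism via the Leibniz expansion and a Fubini-type rearrangement, get bijectivity from $\exp(-\partial)$, read off the $1$-automorphism property from $v(\exp(\partial)(a)-a)>v(a)$, and invert via $\log(\id+\epsilon)$. The one place where your write-up is genuinely only a sketch is the summability infrastructure in part~(i): the claim that $\bigcup_i\supp(\partial^i a)$ is well-ordered is exactly the nontrivial content, and ``propagating control through the iterates'' hides a real argument (typically an application of Neumann's lemma or an equivalent well-ordering principle for iterated strongly linear maps). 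Likewise, in part~(iii) the verification that $\log\sigma$ satisfies the Leibniz rule is asserted rather than carried out; your Hopf-algebraic heuristic is correct in spirit, but a complete proof would need either that computation or the observation that $\exp$ is injective on $\fm$ together with $\exp(\log\sigma)=\sigma$ and a direct check that $\exp$ of a derivation is multiplicative (which you already have), then arguing the converse by uniqueness. None of this is a wrong turn---just places where the proposal defers to ``routine bookkeeping'' that, in a self-contained treatment, is where most of the work lives.
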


In many cases, it is easier to work with contracting strongly linear derivations than $1$-automorphisms, as we will see in \Cref{prop:trivialexpstrong} below.
It is also easy to find non-trivial contracting derivations, using the following variant of a construction from~\cite{Ca18,Ha18}.

\begin{example}\label{ex:derivations}
Let $\phi \in \Hom((\No,+),(\R,+))$ and consider the derivation $\partial_\phi\colon\No\to \No$ given by
\[
\sum_{g \in \No}r_g t^{g} \mapsto \sum_{g \in \No}r_g \phi(g)t^{g-1}.
\]
Then $\partial_\phi$ is a contracting strongly $\R$-linear derivation, so $\exp(\partial_\phi)$ belongs to $\uAut(\No,+,\cdot)$.
\end{example}

 We now present an example of a $1$-automorphism of $(\No,+,\cdot)$ that is not strongly additive.

\begin{example}\label{ex:notstrong}
Let $K \subseteq \No$ be the field of Puiseux series over $\R$ in $\omega^{-1}$, so $K$ consists of all surreal numbers of the form $\sum_{i \in \Z} r_i \omega^{i/n}$ where $n \in \N$ and where $r_i = 0$ for all sufficiently large $i$.
Then $K$ is real closed with value group $\Q$.

Let $a \coloneqq \sum_{n \in \N}\omega^{-1/n}$.
Then $a$ is a pseudolimit of the pseudocauchy sequence $(\sum_{n=1}^m \omega^{-1/n})_{m <\omega}$ in $K$, so $K(a)$ and its real closure $K(a)^{\rc}$ are immediate extensions of $K$ (because $K$ is real closed and $a$ is a pseudolimit).
Let $\epsilon \coloneqq \omega^{-\omega}\in \No$, so $0<\epsilon<K(a)^{>0}$.
Thus, $a$ and $a+\epsilon$ realize the same cut over $K$, so there is a unique isomorphism $\sigma_1\colon K(a)^{\rc}\to K(a+\epsilon)^{\rc}$ that is the identity on $K$ and sends $a$ to $a+\epsilon$.

Using global choice, fix a well-indexed $\Q$-vector space basis $(y_\alpha)_{\alpha\in \On}$ of $\No$ with $y_0 = 1$ and $y_1 = \omega$.
For each $\alpha\in \On$, let $\Gamma_\alpha \coloneqq \bigoplus_{\beta < \alpha} \Q y_\beta$, and let $K_\alpha \coloneqq \R\pow{\Gamma_\alpha}$, identified in the usual way with a subfield of $\No$.
We define an automorphism $\sigma_\alpha$ on $K_\alpha$ for each $\alpha\geq 2$ as follows:
\begin{enumerate}
\item The field $K_2 = K\pow{\Q\oplus \Q\omega}$ is an immediate extension of $K(a,\omega^\omega)^{\rc} = K(a,\epsilon)^{\rc} $.
Using that $\omega^{\omega}$ is greater than both $K(a)^{\rc}$ and $K(a+\epsilon)^{\rc}$, we extend $\sigma_1$ to an automorphism of $K(a,\omega^\omega)^{\rc}$ that fixes $\omega^\omega$.
We further extend this to an automorphism $\sigma_2$ of $K_2$ using the uniqueness of maximal immediate extensions.
\item Let $\alpha\geq 2$ and assume that we have defined $\sigma_\alpha$.
Then $K_{\alpha+1}$ is an immediate extension of $K_\alpha(\omega^{y_{\alpha}})^{\rc}$.
We first extend $\sigma_\alpha$ to an automorphism of $K_\alpha(\omega^{y_{\alpha}})^{\rc}$ that fixes $\omega^{y_{\alpha}}$, and then we further extend this to $K_{\alpha+1}$ by uniqueness of maximal immediate extensions.
\item Let $\lambda>0$ be a limit ordinal, and assume we have defined $\sigma_{\alpha}$ for $\alpha<\lambda$.
Then $K_\lambda$ is an immediate extension of $\bigcup_{\alpha<\lambda} K_\alpha$, so the automorphisms $(\sigma_{\alpha})_{\alpha<\lambda}$ extend to an automorphism $\sigma_\lambda$ of $K_\lambda$, again using uniqueness of maximal immediate extensions.
\end{enumerate}
We have $\No = \bigcup_{\alpha\in\On}K_\alpha$, and we let $\sigma\colon \No\to \No$ be the common extension of the maps $(\sigma_\alpha)_{0<\alpha \in \On}$.
Then $\sigma(\omega^y) = \omega^y$ for all $y \in \No$ and $\sigma(r) = r$ for all $r \in \R$, so $\sigma$ is a 1-automorphism.
Clearly, $\sigma$ is not strongly additive since 
\[
\sigma\big( \sum_{n \in \N}\omega^{-1/n}\big) = \sum_{n \in \N}\omega^{-1/n}+ \omega^{-\omega}\neq \sum_{n \in \N}\sigma(\omega^{-1/n}).
\]
\end{example}

Strongly linear automorphisms have the desirable property that they can be easily expressed by declaring them on monomials and extending by strong linearity.
It is thus natural to ask whether non-strongly linear automorphisms can be approximated by strongly linear ones.

\begin{question}\label{ques:dense}
Can $\Aut(\No,+,\cdot)$ be endowed with a reasonable group topology, such that $\Aut^+(\No,+,\cdot)$ is a dense subgroup?
\end{question}

The next remark rules out a natural topology.

\begin{remark}\label{rmk:nondense}
Consider the \emph{topology of pointwise convergence}---the topology with basic open neighborhoods\footnote{As discussed in \Cref{rem:Autdoesn'texist}, $\Aut(\No,+,\cdot)$ is not itself a class in NBG set theory.
Accordingly, when discussing a topology on this group, it does not make sense to use the terminology ``open sets'' or ``open classes'', so we instead say ``open neighborhoods''.
These are definable collections of automorphisms in NBG, closed under finite intersections and arbitrary definable unions.} of the form
\[
U_{\sigma}(y_1,\ldots,y_n) \coloneqq \{\tau \in \Aut(\No,+,\cdot)\mid \tau(y_i) = \sigma(y_i)\text{ for }i = 1,\ldots,n\}
\]
where $n\in \N$, $\sigma \in \Aut(\No,+,\cdot)$, and $y_1,\ldots,y_n \in \No$.
This is the analogue of the topology used for the Galois group of an infinite Galois extension.

Example~\ref{ex:notstrong} shows that $\Aut^+(\No,+,\cdot)$ is not dense in $\Aut(\No,+,\cdot)$ with respect to this topology.
To see this, let $\sigma$ and $a = \sum_{n \in \N}\omega^{-1/n}$ be as in \Cref{ex:notstrong} and consider the open neighborhood
\[
U_\sigma(a,\omega) = \{\tau \in \Aut(\No,+,\cdot) \mid \tau(a) = a+\omega^{-\omega} \text{ and }\tau(\omega) = \omega\}.
\]
For $\tau$ in this neighborhood, we have $\tau(\omega^{-1/n}) = \tau(\omega)^{-1/n} = \omega^{-1/n}$, so 
\[
\tau\big(\sum_{n \in \N}\omega^{-1/n} \big) \neq \sum_{n \in \N}\omega^{-1/n} = \sum_{n \in \N}\tau(\omega^{-1/n}).
\]
Thus, this neighborhood contains no strongly additive automorphisms.
\end{remark}

\section{Preserving exponentiation and the $\omega$-map}\label{sec:omega}
In addition to its structure as a field of generalized power series, the field of surreal numbers enjoys a great deal of additional structure:
\begin{enumerate}
\item There is a well-founded partial ordering $<_s$ on $\No$, and with this ordering, $(\No,<,<_s)$ is a \emph{full lexicographically ordered binary tree}; see~\cite{Eh01}.
\item There is an exponential $\exp$ on $\No$ that extends the real exponential~\cite{Go86}.
With this exponential, $(\No,+,\cdot,\exp)$ is an elementary extension of the field of real numbers~\cite{DE01}.
\item There is the omega map $x \mapsto \omega^x$, mentioned in the introduction.
With this map, $(\No,+,\cdot,\omega^x)$ is a so-called \emph{omega-field}~\cite{BKMM23}.
\end{enumerate}

In this section, we briefly touch on the automorphism groups of these richer structures (though much remains open in the last two cases).

\begin{proposition}[{\cite[Theorem 5.1]{Eh94}}]\label{prop:lesslessstrivial}
The subgroup $\Aut(\No,<,<_s)$ is trivial.
\end{proposition}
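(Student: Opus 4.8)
The plan is to exploit the fact that an automorphism $f$ of $(\No,<,<_s)$ must preserve the tree structure, and in particular must fix the root and commute with the "simplest element of a cut" operation. First I would recall that $(\No,<,<_s)$ being a full lexicographically ordered binary tree means: $<_s$ is a well-founded partial order whose levels are indexed by ordinals, each element has exactly two immediate $<_s$-successors, and for any two sets $L,R\subseteq\No$ with $L<R$ (in the $<$-ordering) there is a unique $<_s$-minimal element $x$ with $L<x<R$, namely the Conway bracket $x=\{L\mid R\}$. An automorphism $f$ of this structure preserves $<$, preserves $<_s$, and hence preserves the cut relation, so $f(\{L\mid R\})=\{f(L)\mid f(R)\}$ for all such $L,R$.

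The key steps, in order: (1) Observe $f$ fixes the root, which is $0=\{\mid\}$, since $0$ is the unique $<_s$-minimal element. (2) Show by induction on the $<_s$-rank (the "birthday") that $f$ fixes every surreal number. For the successor/inductive step, any surreal $x$ of birthday $\beta$ is the simplest element of the cut $\{L_x\mid R_x\}$ where $L_x$ and $R_x$ consist of surreals of birthday $<\beta$; by the inductive hypothesis $f$ fixes every element of $L_x\cup R_x$ pointwise, and since $f$ preserves the cut operation, $f(x)=\{f(L_x)\mid f(R_x)\}=\{L_x\mid R_x\}=x$. (3) Conclude that $f=\id_{\No}$, so $\Aut(\No,<,<_s)$ is trivial. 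One should be slightly careful that the induction is a transfinite induction along the proper-class-indexed birthday function, which is legitimate in the NBG-with-global-choice setting adopted in the paper; alternatively one can phrase it as: the class of fixed points of $f$ is $<_s$-downward closed and closed under the cut operation, hence is all of $\No$.

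The main obstacle — really the only nontrivial point — is pinning down exactly what structure "$(\No,<,<_s)$ is a full lexicographically ordered binary tree" provides, i.e. that it encodes enough to recover the cut/bracket operation purely from $<$ and $<_s$, so that any isomorphism of the two-sorted-relation structure automatically commutes with $\{\cdot\mid\cdot\}$. This is precisely the content of Ehrlich's characterization in \cite{Eh01}: the simplicity hierarchy together with the order determines, for each $x$, its canonical representation $x=\{x_L\mid x_R\}$ with $x_L,x_R$ ranging over the $<_s$-predecessors of $x$ on the appropriate side, and this description is first-order in $(<,<_s)$. Once that is in hand, the transfinite induction on birthdays is completely routine and the triviality follows immediately.
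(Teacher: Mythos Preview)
Your argument is correct and follows the same overall strategy as the paper: a transfinite induction along the simplicity hierarchy showing that every surreal is fixed. The only difference is in how the inductive step is packaged. The paper works directly with the tree structure: if $\sigma$ fixes $x$, then it must permute the two immediate $<_s$-successors of $x$, and since one is $<x$ and the other $>x$, each is fixed; at limit stages the unique $<_s$-supremum of a fixed chain is fixed. You instead invoke the Conway bracket, arguing that $f$ commutes with $\{\cdot\mid\cdot\}$ and then appeal to the canonical representation $x=\{L_x\mid R_x\}$ with $L_x,R_x$ of strictly smaller birthday. Your route is slightly less self-contained, since (as you correctly flag) it leans on Ehrlich's result that the bracket is recoverable from $(<,<_s)$, whereas the paper's proof uses only the raw tree axioms. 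Either way the induction is routine once the right invariant is identified.
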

\begin{proof}
As a full lexicographically ordered binary tree, $(\No,<,<_s)$ has the following properties:\ every $x \in \No$ has exactly two immediate $<_s$-successors (one $< x$ and one $>x$), and every $<_s$-chain $(y_\alpha)_{\alpha<\lambda}$ in $\No$ with $\lambda$ a limit ordinal has a unique $<_s$-supremum (when $\lambda = 0$ and this chain is empty, then $0$ is this supremum).
Moreover, any surreal number is either an immediate $<_s$-successor of another number, or a $<_s$-supremum of such a chain.

Let $\sigma \in \Aut(\No,<,<_s)$.
If $x \in \No$ is fixed by $\sigma$, then its two immediate $<_s$-successors must be as well.
If $\sigma$ fixes an $<_s$-chain $(y_\alpha)_{\alpha<\lambda}$ with $\lambda$ a limit, then it fixes the unique $<_s$-supremum of this chain.
As $\sigma$ fixes the empty chain, it fixes zero, and thus by induction it fixes all of $\No$.
\end{proof}

Now we turn to automorphisms of $(\No,+,\cdot,\exp)$.
Recall that $(\No,+,\cdot,\exp)$ is strongly homogeneous:\ for any set-sized elementary exponential subfield $K \subseteq \No$ and any elementary embedding $\sigma\colon K\to \No$, there is an automorphism of $(\No,+,\cdot,\exp)$ extending $\sigma$ (see \Cref{rmk:stronghomog}).
While automorphisms of $(\No,+,\cdot,\exp)$ are therefore plentiful, we have not been able to find an example of a non-trivial $1$-automorphism of $(\No,+,\cdot,\exp)$.
This is partially explained by the following negative result.

\begin{proposition}\label{prop:trivialexpstrong}
The group $\uAut^+_\R(\No,+,\cdot,\exp)$ of strongly $\R$-linear exponential field $1$-automorphisms is trivial.
\end{proposition}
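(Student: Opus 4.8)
The plan is to use the correspondence from \Cref{fact:deraut} between strongly $\R$-linear $1$-automorphisms and contracting strongly $\R$-linear derivations, and to reduce the claim to the statement that the only contracting strongly $\R$-linear derivation $\partial$ on $\No$ that is compatible with $\exp$ — in the sense that $\exp(\partial)$ commutes with the exponential — is $\partial = 0$. The compatibility condition on $\partial$ should be the infinitesimal (Lie-algebra) version of the automorphism condition: differentiating $\exp(\partial)(\exp a) = \exp(\exp(\partial)(a))$ term by term, one gets that $\partial$ is a \emph{derivation compatible with $\exp$}, i.e.\ $\partial(\exp a) = \exp(a)\cdot \partial(a)$ for all $a\in\No$, equivalently $\partial(\exp a)/\exp(a) = \partial(a)$. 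Making this reduction precise (that $\exp(\partial)$ is an exponential automorphism iff $\partial$ satisfies this Leibniz-type rule for $\exp$) is the first key step; it uses strong linearity of $\partial$ and the formal identity $\exp(\partial)\circ\exp = \exp\circ\,\exp(\partial)$ expanded as power series, together with the fact that $\partial^i(\exp a)$ can be computed via Faà di Bruno–type formulas from the single rule $\partial(\exp a) = \exp(a)\partial(a)$.

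Next I would exploit the structure of $\exp$ on $\No$ via Gonshor's description: every positive surreal can be written (using the $\omega$-map and $\exp$) so that the value group $\No$ is generated, under $\exp$ and the archimedean/monomial structure, from $\R$ and the monomials $\omega^y$. Concretely, for $y\in\No^{>\R}$ one has $\exp(\omega^y)$ is again a monomial $\omega^{g(y)}$ for a certain strictly increasing $g$, and more relevantly: for an infinitesimal $\delta$ with $v(\delta)>0$, $\exp(\delta) = 1 + \delta + \delta^2/2 + \cdots$ lies in $1 + \fm$, while for a "purely infinite" part the exponential lands among the monomials. The compatibility rule $\partial(\exp a) = \exp(a)\,\partial(a)$ applied to $a$ purely infinite forces strong constraints on $\partial$ restricted to the "log-atomic" or monomial part of $\No$. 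The plan is to show: (i) $\partial$ kills $\R$ (given, since $\partial$ is $\R$-linear hence $\partial|_\R = 0$ — wait, strongly $\R$-linear means identity-related; actually $\partial$ being $\R$-linear and a derivation gives $\partial(r) = 0$ for $r\in\R$ automatically from $\partial(1)=0$); (ii) using $\exp$-compatibility, $\partial(\omega^y)$ is determined by $\partial(y)$, pushing the problem down to how $\partial$ acts on the value group $\No$ itself; (iii) since $\No$ is its own value group, this becomes a self-referential constraint that, combined with contractiveness $v(\partial a) > v(a)$, can only be satisfied by $\partial = 0$.

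The cleanest route for step (iii) is probably a minimality / simplicity-induction argument: suppose $\partial \neq 0$ and pick $a\in\No$ of minimal simplicity (or minimal in some well-ordered sense, e.g.\ using the well-ordered $\Q$-basis as in \Cref{ex:notstrong}) with $\partial a \neq 0$. Using $\R$-linearity, the derivation rule, and $\exp$-compatibility, express $\partial a$ in terms of $\partial$ applied to simpler elements (the Conway normal form of $a$ involves monomials $\omega^{y_\beta}$ with $y_\beta$ built from simpler data, and $\exp/\log$ relate these to still-simpler pieces), deriving $\partial a = 0$, a contradiction. Alternatively — and this may be slicker — combine compatibility with $\exp$ and with $\log$: for $a \in 1+\fm$, $\log a = a - 1 - (a-1)^2/2 + \cdots$ is strongly $\R$-linear in $a-1$, and $\partial(\log a) = \partial(a)/a$; iterating $\log$ on the monomial part and using that $\No$ has no log-atomic elements fixed by arbitrarily deep logarithms in a way compatible with a nonzero contracting derivation should collapse $\partial$ to $0$.

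The main obstacle I expect is step (iii): turning the self-referential constraint "$\partial$ on $\No$ is controlled by $\partial$ on the value group, which is $\No$ again" into an actual proof of triviality. One must be careful that contractiveness ($v(\partial a) > v(a)$) genuinely interacts with the $\exp$-compatibility to prevent, say, a derivation supported "cofinally high up." I would handle this by setting up the induction on the surreal birthday/simplicity ordering of $a$ and checking that every term appearing in the expansion of $\partial a$ via Conway normal form plus the $\exp$/$\log$ relations involves only surreals of strictly smaller birthday, so that the inductive hypothesis $\partial = 0$ on simpler elements forces $\partial a = 0$; the base case $\partial|_\R = 0$ is immediate. The one genuinely delicate point is justifying that the relevant term-by-term manipulations (applying a strongly $\R$-linear $\partial$ to $\exp$ and $\log$ of infinite sums) are legitimate, which should follow from strong linearity together with the contracting hypothesis ensuring the relevant families are summable.
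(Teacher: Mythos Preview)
Your reduction to contracting derivations via \Cref{fact:deraut} matches the paper's first move, but from there the plans diverge, and your step~(iii) has a genuine gap.

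The induction on simplicity cannot work as stated. You assert that ``the Conway normal form of $a$ involves monomials $\omega^{y_\beta}$ with $y_\beta$ built from simpler data,'' but this is false: the exponents $y_\beta$ need not be simpler than~$a$. The cleanest counterexamples are the generalized $\epsilon$-numbers, i.e.\ the fixed points of $x\mapsto\omega^x$; for such $a$ the Conway normal form is the single term $\omega^a$, with exponent equal to $a$ itself. Passing to $\log(\omega^a)$ does not help, since there is no reason for the logarithm of a monomial to be simpler than the monomial (and $\No$ has a proper class of log-atomic elements, so your alternative ``iterate $\log$'' route stalls on exactly the same obstruction). Thus the inductive hypothesis is never available where you need it. A secondary issue is that the equivalence ``$\exp(\partial)$ commutes with $\exp$ iff $\partial(\exp a)=\exp(a)\,\partial a$'' is asserted but not proved; the forward direction in particular is not obvious.

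The paper avoids all of this with a short valuation argument that never touches the fine structure of $\exp$ on~$\No$. Given a nonzero contracting $\partial$, one first manufactures $y$ with $v(\partial y)<0$: take $a$ with $\partial a\neq 0$ and $b$ with $v(b)<\min\{0,-v(\partial a)\}$, and check via the Leibniz rule that one of $y=ab$ or $y=ab^2$ works. Setting $\sigma\coloneqq\exp(\partial)$ and $\epsilon\coloneqq\sigma(y)-y$, contractiveness gives $v(\epsilon)=v(\partial y)<0$, so $\epsilon$ is infinite and hence $v(\exp\epsilon)\neq 0$. Then
\[
v\big(\exp(\sigma y)\big)=v(\exp y)+v(\exp\epsilon)\neq v(\exp y)=v\big(\sigma(\exp y)\big),
\]
the last equality because a $1$-automorphism preserves valuations. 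So $\sigma$ does not commute with $\exp$. No Fa\`a di Bruno, no simplicity induction, and no need for the derivation-level compatibility condition at all.
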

\begin{proof}
Let $\partial \colon \No\to \No$ be a contracting strongly $\R$-linear derivation and suppose that $\partial$ is not identically zero.
In light of \Cref{fact:deraut}, it is enough to show that the automorphism $\sigma \coloneqq \sum_{i=0}^\infty \frac{1}{i!}\partial^i$ does not commute with the exponential map.

We claim that we can find $y \in \No$ with $v(\partial y)<0$.
First, take $a \in \No$ with $\partial a \neq 0$ and take $b \in \No$ with $v(b)<\min\{0,-v(\partial a)\}$.
Then 
\[
\partial(ab) = a \partial b + b\partial a,\qquad \partial(ab^2) = b(2a \partial b + b \partial a).
\]
Either $v(a \partial b + b\partial a)$ or $v(2a \partial b + b \partial a)$ is at most $v(b \partial a )<0$, so taking either $y = ab$ or $y = ab^2$, we get $v(\partial y)<0$.

Now with $y$ as above, let $\epsilon \coloneqq \sigma(y) - y$, so 
$\exp(\sigma y) = \exp(y + \epsilon) = \exp(y) \exp(\epsilon)$.
Since $\partial$ is contracting, we have $v(\epsilon ) = v(\partial y )<0$, so $v\exp (\epsilon)\neq 0$.
Therefore,
\[
v \exp (\sigma y) = v\exp (y) + v\exp(\epsilon) \neq v\exp(y) = v(\sigma(\exp y)),
\]
where the last equality uses that $\sigma$ is a $1$-automorphism.
Thus, $\sigma$ does not commute with the exponential.
\end{proof}

Automorphisms obtained from compositions as in \Cref{ex:compositions} are always strongly $\R$-linear and commute with the exponential.
Thus we have the following.

\begin{corollary}
Automorphisms obtained from compositions are never $1$-automorphisms.
\end{corollary}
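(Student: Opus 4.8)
The statement to prove is the Corollary: automorphisms obtained from compositions (as in Remark~\ref{ex:compositions}) are never $1$-automorphisms.

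The plan is to combine the two sentences immediately preceding the corollary with Proposition~\ref{prop:trivialexpstrong}. First I would recall the defining properties of a composition-type automorphism $\sigma_g$: by the conjecture quoted in Remark~\ref{ex:compositions}, $\sigma_g$ is a strongly $\R$-linear automorphism of $\No$, and since the composition law is required to interact well with the exponential (indeed $\exp$ and $\log$ are themselves given by composition-like formulas in the hyperseries framework, and $(f_1 f_2)\circ g = (f_1\circ g)(f_2\circ g)$, $\exp(f)\circ g = \exp(f\circ g)$ are among the natural conditions), $\sigma_g$ is moreover an automorphism of the exponential field $(\No,+,\cdot,\exp)$. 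So $\sigma_g \in \Aut(\No,+,\cdot,\exp)$ and $\sigma_g$ is strongly $\R$-linear.

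Next I would argue by contradiction: suppose some composition automorphism $\sigma_g$ is a $1$-automorphism. Then $\sigma_g$ lies in $\uAut^+_\R(\No,+,\cdot,\exp)$, the group of strongly $\R$-linear exponential-field $1$-automorphisms. By Proposition~\ref{prop:trivialexpstrong}, that group is trivial, so $\sigma_g = \id_{\No}$. But $\sigma_g = \id$ would force $f \circ g = f$ for all $f \in \No$, in particular taking $f$ to be the identity function (or $f = \omega^x$ composed appropriately) gives $g = \id$, contradicting $g \in \No^{>\R}$ (the identity function on $\No$ is not a surreal number greater than all reals in the relevant sense — more simply, $\sigma_{\id}$ is not among the composition automorphisms one considers, or: a genuine composition by $g \neq \id$ moves some element). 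Hence no composition automorphism is a $1$-automorphism.

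The only real subtlety — and thus the main obstacle — is making precise why $\sigma_g$ commutes with $\exp$, since this depends on the (still conjectural) properties of the hyperseries composition law rather than on anything proved in the paper. I would handle this by citing the relevant conditions from~\cite{Ba22B} (the same reference used in Remark~\ref{ex:compositions}) asserting that composition respects the exponential, so that the statement is understood as conditional on that framework, exactly as Remark~\ref{ex:compositions} already is. With that in hand the corollary is immediate from Proposition~\ref{prop:trivialexpstrong}, and the proof is just the two-line contradiction argument above together with the observation that composition by $g$ is never the identity.
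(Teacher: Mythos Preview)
Your proposal is correct and follows exactly the paper's approach: the corollary is deduced immediately from the two sentences preceding it, namely that composition automorphisms are strongly $\R$-linear and commute with $\exp$, together with \Cref{prop:trivialexpstrong}. Your added care about why $\sigma_g$ respects $\exp$ and why $\sigma_g\neq\id$ goes slightly beyond what the paper spells out (the paper simply asserts both facts as part of the conjectural framework of \Cref{ex:compositions}); note, though, that in the hyperseries setting there \emph{is} a surreal playing the role of the compositional identity, so the corollary is best read as ``no nontrivial composition automorphism is a $1$-automorphism,'' which is exactly what your argument establishes.
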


\begin{question}
Are there any non-trivial $1$-automorphisms of $(\No,+,\cdot,\exp)$?
\end{question}

Finally, we turn to automorphisms that commute with the $\omega$-map.

\begin{proposition}\label{prop:omega-pres-trivial}
Any automorphism of $(\No,+,\cdot, \omega^x)$ that fixes the valuation is trivial.
In particular, the group $\uAut(\No,+,\cdot, \omega^x)$ is trivial.
\end{proposition}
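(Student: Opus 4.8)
The statement claims that any automorphism $\sigma$ of $(\No,+,\cdot,\omega^x)$ that preserves the valuation $v$ must be the identity, and hence in particular every $1$-automorphism commuting with the $\omega$-map is trivial (since $1$-automorphisms are valuation preserving by definition). The key leverage is that preserving $\omega^x$ forces $\sigma$ to respect the archimedean class structure in a very rigid way, and combining this with valuation preservation pins down the action on monomials; strong linearity or continuity then propagates this to all of $\No$.

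\textbf{First step: reduce to the action on the value group.} Let $\sigma \in \Aut(\No,+,\cdot,\omega^x)$ with $v(\sigma a) = v(\sigma b)$ whenever $v(a)=v(b)$ (and $v(\sigma a)>v(\sigma b)$ whenever $v(a)>v(b)$, i.e.\ $\sigma$ is valuation preserving in the sense of \Cref{sec:prelims:aut}). Recall under the Conway identification $t = \omega^{-1}$, so $\omega^y = t^{-y}$ and $v(\omega^y) = -y$. Since $\sigma$ commutes with $x \mapsto \omega^x$, we have $\sigma(\omega^y) = \omega^{\sigma(y)}$ for every $y \in \No$, hence $v(\sigma(t^{-y})) = v(\omega^{\sigma(y)}) = -\sigma(y)$. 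In the notation for induced automorphisms this says precisely $\sigma_{\No} = \sigma|_{\No}$: the automorphism of the value group $(\No,+,<)$ induced by $\sigma$ is just the restriction of $\sigma$ itself. So it suffices to show $\sigma|_{\No} = \id$, i.e.\ that $\sigma$ fixes every surreal number --- but that is exactly the full conclusion, so the real content is elsewhere.

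\textbf{Second step: fix the reals and then fix $\No$ pointwise via $\omega^x$-recursion.} Any field automorphism of $\No$ restricts to a field automorphism of the real closure of $\Q$ inside $\No$; but more is true: $\sigma$ is order preserving (\Cref{lem:rcimpliesorder}, since $\No$ is real closed), so $\sigma|_\R$ is an order-preserving field automorphism of $\R$, hence $\sigma|_\R = \id_\R$. Now I run a simplicity/length induction on $\No$. The crucial observation is that for a field automorphism $\sigma$ commuting with $\omega^x$ and fixing $\R$ pointwise, if $\sigma$ fixes a surreal $y$ then it fixes $\omega^y$, and if it fixes all elements of the support and all exponents appearing in the Conway normal form of some $a = \sum_{\beta<\alpha} r_\beta \omega^{y_\beta}$, then --- using that $\sigma$ is additive and fixes each $r_\beta \in \R$ --- it fixes $a$ provided it is valuation preserving enough to guarantee $\sigma(\sum r_\beta \omega^{y_\beta}) = \sum r_\beta \omega^{\sigma(y_\beta)}$ term by term. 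Valuation preservation is what rules out cancellation or "spilling" between terms: each $\sigma(r_\beta \omega^{y_\beta}) = r_\beta \omega^{y_\beta}$ has a strictly increasing sequence of valuations $-y_\beta$, and an internal-type argument (cf.\ \Cref{rmk:intaut}, \Cref{rmk:intgroup}) shows a valuation-preserving additive map sends a sum with strictly decreasing valuations to the sum of the images. The induction is on the exponents $y_\beta$, which are simpler than $a$ itself in the surreal sense, so this terminates. Concretely: order $\No$ by the well-founded simplicity relation (or by length), assume $\sigma$ fixes all surreals simpler than $a$, observe every exponent $y_\beta$ in the Conway normal form of $a$ is simpler than $a$, conclude $\sigma(\omega^{y_\beta}) = \omega^{\sigma(y_\beta)} = \omega^{y_\beta}$, and finish with the term-by-term valuation argument.

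\textbf{The main obstacle.} The delicate point is the term-by-term step: showing that a valuation-preserving \emph{field} automorphism fixing each $\omega^{y_\beta}$ and each $r_\beta$ actually fixes the infinite sum $\sum_\beta r_\beta \omega^{y_\beta}$, rather than some element with the same leading term. A priori $\sigma$ need not be strongly additive. The resolution is that valuation preservation plus $\sigma(\omega^{y_\beta}) = \omega^{y_\beta}$ for \emph{all} $\beta$ is enough: one shows by transfinite induction on $\beta < \alpha$ that $v(\sigma(a) - \sum_{\gamma < \beta} r_\gamma \omega^{y_\gamma}) = -y_\beta$ (using that $\sigma$ fixes the partial sums, which lie in a subfield already handled, and that $\sigma$ cannot create a term of valuation strictly between consecutive $-y_\gamma$'s without violating valuation preservation applied to $a$ minus earlier partial sums). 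Hence $\sigma(a)$ and $a$ have the same Conway normal form, so $\sigma(a) = a$. Once this is established for all $a$, $\sigma = \id$, and the "in particular" for $\uAut(\No,+,\cdot,\omega^x)$ is immediate since every $1$-automorphism is valuation preserving.
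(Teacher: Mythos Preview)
You have misread the hypothesis. ``Fixes the valuation'' means $v(\sigma(a)) = v(a)$ for all $a \in \No$, not merely that $\sigma$ is valuation preserving in the sense of \Cref{sec:prelims:aut}. (Indeed, by \Cref{prop:vautisaut} \emph{every} field automorphism of $\No$ is valuation preserving in that weaker sense, so your reading would make the proposition assert that $\Aut(\No,+,\cdot,\omega^x)$ is trivial --- a question the paper explicitly leaves open just after this proposition.) Under the correct reading the proof is one line, and your first step already contains it: you compute $v(\sigma(\omega^y)) = v(\omega^{\sigma(y)}) = -\sigma(y)$, and since also $v(\sigma(\omega^y)) = v(\omega^y) = -y$ by hypothesis, you get $\sigma(y) = y$ for every $y$. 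This is exactly the paper's proof; everything after your first step is unnecessary.

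The argument you attempt under the weaker hypothesis has two genuine gaps. First, the simplicity induction in your second step assumes that every exponent $y_\beta$ in the Conway normal form of $a$ is strictly simpler than $a$, but this fails for generalized $\epsilon$-numbers: if $a = \omega^a$ then the Conway normal form is $1\cdot \omega^a$ with exponent $a$ itself, and the induction becomes circular. Second, your resolution of the ``main obstacle'' does not close the gap it identifies: the transfinite induction on $\beta<\alpha$ shows at best that $\sigma(a)$ and $a$ agree on the terms with exponents $y_\beta$ for $\beta<\alpha$, not that $\sigma(a)$ has no additional terms with exponent below every $y_\beta$. \Cref{ex:notstrong} exhibits exactly this phenomenon: there $\sigma$ fixes the valuation and fixes each $\omega^{-1/n}$, yet $\sigma\big(\sum_{n\in\N} \omega^{-1/n}\big) = \sum_{n\in\N} \omega^{-1/n} + \omega^{-\omega}$, so an extra tail term survives the induction.
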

\begin{proof}
Let $\sigma$ be an automorphism preserving the omega map and let $x \in \No$, so $\sigma(\omega^x) = \omega^{\sigma(x)}$.
Recall that we identify $\omega^x$ with $t^{-x} \in \R\pow{\No}$, so $v(\omega^x) = -x$.
Thus, if $v(\sigma(\omega^x)) = v(\omega^x)$, then $\sigma(x) = x$.
\end{proof}

\begin{question}
What are the non-trivial automorphisms of $(\No,+,\cdot, \omega^x)$?
\end{question}
The search for these automorphisms seems related to the class of \textbf{generalized $\epsilon$-numbers} (that is, the fixed points of the map $x \mapsto \omega^x$).
As an ordered class, the generalized $\epsilon$-numbers are in bijective correspondence with $(\No,<)$; see~\cite[Theorem 9.1]{Go86}.
Note that any $\sigma \in \Aut(\No,+,\cdot, \omega^x)$ induces an automorphism of the ordered class of generalized $\epsilon$-numbers:\ if $\omega^x = x$, then $\omega^{\sigma(x)} = \sigma( \omega^x) = \sigma(x)$, so $\sigma(x)$ is a generalized $\epsilon$-number as well.
More generally, Lemire established the existence of a proper class of surreals of the form
\[
x = a_0 \omega^{a_1\omega^{a_2\omega^{\cdot^{\cdot^\cdot}}}}
\]
for any sequence $(a_n) \in \{-1,1\}^\omega$~\cite{Le96}.
These classes are again in bijective correspondence with $(\No,<)$, and are necessarily fixed by any $\sigma \in \Aut(\No,+,\cdot, \omega^x)$.

\begin{question}
Does any order-isomorphism of the generalized $\epsilon$-numbers extend to an automorphism of $(\No,+,\cdot, \omega^x)$?
\end{question}

\subsection*{Acknowledgments}
This project was initiated within the Ontario/Baden-Württemberg Faculty Mobility Program in August 2023 and resumed at the Banff International Research Station in February 2024.
We gratefully acknowledge the involved institutions for partial funding and their hospitality.
The first author is a postdoctoral researcher of the Fonds de la Recherche Scientifique -- FNRS, and was supported in part by the National Science Foundation under Award No.\ DMS-2103240.
Parts of this research were conducted while the first author was hosted by the Max Planck Institute for Mathematics, and he thanks the MPIM for its support and hospitality.
The second author was partially supported by the Vector Stiftung.

We also thank Salma Kuhlmann for suggesting this project, Patrick Speissegger for his support, Vincent Bagayoko for helpful discussions, Micka{\"e}l Matusinski for pointing out \Cref{ques:dense} and the anonymous referee for helpful remarks and comments.

\subsection*{Authorship Contribution Statement:} All authors contributed equally to this work.

\vfill

\textsc{Elliot Kaplan}, Département de Mathématique, Université de Mons, Belgium\\
\textit{E-mail address}: \texttt{elliot.kaplan@umons.ac.be}

\medskip

\textsc{Lothar Sebastian Krapp}, Institut für Interdisziplinäre Sprachevolutionswissenschaft, Universität Zü\-rich, Switzerland \& Fachbereich Mathematik und Statistik, Universität Konstanz, Germany\\
\textit{E-mail address}: \texttt{sebastian.krapp2@uzh.ch}

\medskip

\textsc{Michele Serra}, Fakultät für Mathematik, Technische Universität Dortmund, Germany\\
\textit{E-mail address}: \texttt{michele.serra@tu-dortmund.de}

\end{document}